\newtheorem{Thm}{Theorem}
\newtheorem{Lem}{Lemma}
\newtheorem{Rem}{Remark}
\title {Monotone smoothing splines with bounds}
\author{Sara Maad Sasane}
\begin{document}
\maketitle \noindent 
\begin{abstract}
The  problem of monotone smoothing splines with bounds is formulated as a constrained minimization problem of the calculus of variations.
Existence and uniqueness of solutions of this problem is proved, as well as the equivalence of it 
 to a finite dimensional but nonlinear optimization problem. A  new algorithm for computing the solution which is a spline curve, 
 using a branch and bound technique, is presented. The method is  applied to examples in neuroscience 
and for fitting cumulative distribution functions from data.

\end{abstract}

\begin{section}{Introduction}\noindent
Splines and smoothing splines have a long history of application in many fields. The basic history is outlined in Egerstedt and Martin, \cite{EM10} and in Wahba, \cite{gW90}. See also \cite{yW11} for an introduction to the use of smoothing splines in statistics. 
In this paper we return to the problem of monotone  smoothing splines, which was previously studied in \cite{EM10,MR945101, MR1707587,NM13}. A classical application is to determine the average growth curve of a population of juveniles. 
Suppose we have a population of perhaps 30 children whose heights are measured every 6 months from age 2 to 20. It is easy to fit a smoothing spline to the data set 
but there is no guarantee of monotonicity. Since people just do not get shorter and then regrow, the smoothing spline is not  appropriate for this and other similar situations. 
Instead, the appropriate tool is the monotone smoothing spline. 
Charles, Sun and Martin \cite{CSM10} used monotone smoothing splines in calculating distribution functions. 
There a problem can arise in that the spline may become greater than $1$ at some point and by monotonicity it can never decrease, and so this violates the condition that a 
cumulative distribution only takes values between $0$ and $1$, a problem that was not addressed in \cite{CSM10}. 
In this paper, we solve this problem by imposing a condition that the spline is bounded above by some number $x_{\max}$
(which would be $1$ in the case of cumulative distribution functions).
We also present an application of monotone smoothing splines in mathematical biology,where sigmoidally shaped function commonly occur.

%This problem has not been addressed 
%in the literature until this paper, and we have examples of how this method can be used to construct cumulative distribution functions using monotone splines.

The main difficulty of the problem is the monotonicity constraint $\dot x\ge 0$, which has to hold at every point $t$ in the interval under consideration. This infinite dimensional
constraint is handled by a vector space version of the Karush--Kuhn--Tucker theorem, and using this, it is possible to reduce the original infinite dimensional problem to a finite
dimensional problem which can be solved numerically. 
%Another problem that can arise in fitting a curve to a growth chart is that the spline may be above the height of the tallest child. Thus it is necessary to impose 
%constraints on the maximum value of the spline at each age. Sometimes it is necessary for the spline to lie in the interval of mean $\pm $ 2 standard deviations at each age. 
%This again is an inequality constraint that must be maintained, which can be handled by a

The paper is organized as follows: In Section 2, we formulate the curve fitting problem as a constrained Calculus of Variations problem, 
and in Section 3, we show existence and uniqueness of the minimizer of this minimization problem, and hence existence and uniqueness of
the monotone smoothing spline function.
In  Section 4, we formulate the Karush--Kuhn--Tucker conditions for this problem, and use these to prove a key lemma, saying that the second derivative of $\ddot x$ is
essentially piecewise linear.
In Section 5, we use the key lemma from Section 4 to reformulate the infinite dimensional linear problem of Section 2 into a finite dimensional but nonlinear problem. This was essentially done previously in 
\cite{EM10}, except that we provide more details in the proof. We also introduce a new branch and bound type
 algorithm for computing the optimal curve. Sections 6 and 7 contain applications and examples which show how the method can be used. In Section 6, we reconstruct 
 sigmoidal shaped curves arising in an intracellular signalling model, while in Section 7, we apply the method on reconstructing cumulative distribution functions using data, and in particular for a cumulative distribution function arising in the cell cycle, and the distribution that we reconstruct gives the time certain cells remain in a particular phase in the cell cycle.

\end{section}
\begin{section}{The problem}\label{S:problem}
Let $T>0$, and let $0=t_0<t_1<\dots<t_m=T$. Consider a data set
$\{(t_i,\alpha_i)\}$ with $\alpha_i\in \mathbb R$, with associated weights $w_i>0$, $i=1,\dots, m$.

Let $x_{\max}>0$, and consider the following optimization problem for functions defined on an interval $[0,T]$:
\begin{equation}\label{E:minproblem}
   \min\left(\frac{1}{2}\int_0^T \ddot{x}(t)^2\, dt + \frac{1}{2}\sum_{i=1}^m w_i(x(t_i)-\alpha_i)^2\right),
\end{equation}
subject to
\begin{equation}\label{E:constraint1}
  \left\{
   \begin{aligned}
      x &\in  H^2((0,T)), \\
      x(0)&=0, \\
      \dot{x}&\ge 0 \text{ on }t\in (0,T), \\
      x(T) &\le x_{\max},
   \end{aligned}\right.
\end{equation}
where $H^2((0,T))$ is the Sobolev space of twice weakly differentiable functions on $(0,T)$.
The condition $x(0)=0$ is included because in many situations in applications, it is for modelling
purposes clear that the curve must satisfy this condition. This happens for example in the application
of the intracellular signalling model that we discuss in Section 6. The condition $x(T)\le x_{max}$ 
arises from $0\le x(t)\le x_{\max}$. Since the curve is monotonically increasing, it suffices to impose
the condition at the endpoint $t=T$. 

As $(0,T)$ is a bounded interval, we can use $\|x\|:= \left(\int_0^T \ddot x(t)^2\, dt\right)^{1/2}$ as a norm on $H^2((0,T))$. 
The rest of the paper is devoted to solving this problem, and to applications of the developed method.
\end{section}
\begin{section}{Existence and uniqueness of a minimizer}
   \begin{Thm}
       Let $X= \{x\in H^2((0,T));\; x(0)=0, \dot x\ge 0, x(T)\le x_{\max}\}$, and assume that $m\ge 1$.
       There exists a unique $x_*\in X$ which solves the minimization problem 
       \begin{equation*}
   \min_{x\in X}\left(\frac{1}{2}\int_0^T \ddot{x}(t)^2\, dt + \frac{1}{2}\sum_{i=1}^m w_i(x(t_i)-\alpha_i)^2\right).
\end{equation*}
   \end{Thm}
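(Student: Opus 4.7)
The plan is to apply the direct method of the calculus of variations: show that the feasible set $X$ is nonempty, convex and weakly sequentially closed, that the functional $J(x):=\frac12\int_0^T\ddot x^2\,dt+\frac12\sum_i w_i(x(t_i)-\alpha_i)^2$ is coercive and weakly lower semicontinuous on $X$, and finally that $J$ is strictly convex on $X$ to obtain uniqueness. First I would note that $0\in X$ (since $x_{\max}>0$) and that $X$ is convex, each defining condition being preserved under convex combinations. A key structural observation is that every $x\in X$ satisfies $0\le x(t)\le x_{\max}$ for all $t\in[0,T]$: the lower bound follows from $x(0)=0$ together with $\dot x\ge 0$, and the upper bound from monotonicity together with $x(T)\le x_{\max}$.

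For coercivity, I would take a minimizing sequence $\{x_n\}\subset X$ with $J(x_n)\le J(0)+1$, so $\|\ddot x_n\|_{L^2}$ is uniformly bounded, and the previous pointwise bound controls $\|x_n\|_{L^\infty}$. To control $\|\dot x_n\|_{L^\infty}$ I would use the identity $\dot x_n(t)=\dot x_n(s)+\int_s^t\ddot x_n(\tau)\,d\tau$ averaged over $s\in(0,T)$: the mean of $\dot x_n$ equals $x_n(T)/T\le x_{\max}/T$, while the remainder is bounded in terms of $\|\ddot x_n\|_{L^2}$ by Cauchy--Schwarz. This yields a uniform bound on $\|x_n\|_{H^2}$.

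A subsequence, still labelled $\{x_n\}$, then converges weakly in $H^2((0,T))$ to some $x_*$; by the compact embedding $H^2((0,T))\hookrightarrow C^1([0,T])$ the convergence is strong in $C^1$, so the pointwise conditions $x_*(0)=0$, $\dot x_*\ge 0$, $x_*(T)\le x_{\max}$ pass to the limit and $x_*\in X$. The first summand of $J$ is convex and norm-continuous on $H^2$, hence weakly lower semicontinuous, and the second summand is continuous with respect to $C^0$-convergence. Together these give $J(x_*)\le\liminf_n J(x_n)=\inf_X J$, so $x_*$ is a minimizer.

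For uniqueness, suppose $x,y\in X$ both attain the minimum. By convexity of $X$ and of $J$, the midpoint $(x+y)/2\in X$ also minimizes, and equality in the convexity of the first summand forces $\int_0^T(\ddot x-\ddot y)^2\,dt=0$, so $x-y$ is affine; combined with $x(0)=y(0)=0$ this gives $(x-y)(t)=ct$. Equality in the second summand gives $x(t_i)=y(t_i)$ for every $i$ (since all $w_i>0$), and choosing $i=1$ (using $m\ge 1$ and $t_1>0$) yields $c=0$, hence $x=y$. I expect the coercivity step to be the main technical point, since $(\int\ddot x^2)^{1/2}$ vanishes on affine functions and the bounds on $x$ and $\dot x$ must be extracted from the constraints $x(0)=0$, $\dot x\ge 0$, $x(T)\le x_{\max}$ rather than from the functional itself.
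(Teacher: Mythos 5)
Your proposal is correct, and its overall architecture (direct method plus strict convexity for uniqueness) matches the paper's proof; the uniqueness argument in particular is essentially identical, reducing equality in convexity to $\ddot x=\ddot y$, hence $x-y$ affine and vanishing at $0$, and then killing the linear part via $x(t_1)=y(t_1)$ with $t_1>0$. Where you genuinely diverge is the coercivity/compactness step. The paper declares in Section 2 that $\bigl(\int_0^T\ddot x^2\,dt\bigr)^{1/2}$ ``can be used as a norm on $H^2((0,T))$'' and then calls coercivity obvious; as stated this is only a seminorm (it vanishes on $x(t)=ct$, which also satisfies $x(0)=0$), so the paper's coercivity claim implicitly relies on exactly the constraint-derived bounds that you make explicit. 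Your observation that every $x\in X$ satisfies $0\le x\le x_{\max}$, together with the mean-value-plus-Cauchy--Schwarz control of $\dot x$ (the mean of $\dot x$ over $[0,T]$ being $x(T)/T\le x_{\max}/T$), is precisely what upgrades boundedness of $\int\ddot x^2$ along a minimizing sequence to a genuine $H^2$ bound, and it is the cleanest way to justify the step the paper glosses over. A minor further difference: you pass the constraints to the limit via strong $C^1$ convergence from the compact embedding, while the paper invokes Mazur's lemma to get weak closedness of the convex closed set $X$; both are standard and both work. In short, your write-up is a correct proof, and on the one delicate point it is more careful than the published argument.
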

   \begin{proof}
      Let $f:X\to \mathbb R$ 
      \begin{equation}\label{E:f-def}
         f(x) = \frac{1}{2}\int_0^T \ddot{x}(t)^2\, dt + \frac{1}{2}\sum_{i=1}^m w_i(x(t_i)-\alpha_i)^2,
      \end{equation}

      We will use the direct method in the calculus of variations, which says that if a functional is coercive on $H^2((0,T))$  and
      weakly lower semicontinuous on a weakly closed set, then a minimizer exists (see e.g. \cite{mS96}, p. 4). 
      It is easy to see that the set $X$ is convex and closed in $H^2((0,T))$ and hence it is weakly closed by Mazur's lemma
      (see e.g. Theorem 3.13 of \cite{wR91}).
      
      We will first check that the first term of $f$ is weakly lower semicontinuous and coercive on $L^2([0,T])$. Indeed, it is weakly
      lower semicontinuous since
      \begin{equation*}
         0\le \int_0^T (\ddot{x}_j(t)-\ddot{x}(t))^2\, dt = \int_0^T \ddot{x}_j(t)^2\, dt -2\int_0^T \ddot{x}_j(t) \ddot{x}(t)\, dt + \int_0^T \ddot{x}(t)^2\, dt,
      \end{equation*}
      and so if $x_j\rightharpoonup x$ (i.e. $x_j$ converges weakly to $x$ in $H^2((0,T))$), then
      \begin{equation*}
         \begin{aligned}
            0&\le \liminf_{j\to\infty} \left(\int_0^T \ddot{x}_j(t)^2\, dt -2\int_0^T \ddot{x}_j(t) \ddot{x}(t)\, dt + \int_0^T \ddot{x}(t)^2\, dt\right) \\
            &= \liminf_{j\to\infty} \left(\int_0^T \ddot{x}_j(t)^2\, dt - \int_0^T \ddot{x}(t)^2\, dt\right),
         \end{aligned}
      \end{equation*}
      i.e.
      \begin{equation*}
         \int_0^T \ddot{x}(t)^2\, dt \le \liminf_{j\to\infty} \int_0^T \ddot{x}_j(t)^2\, dt,
      \end{equation*}
      which shows that $x\mapsto \int_0^T \ddot{x}(t)^2\, dt$ is  weakly lower semicontinuous on $H^2((0,T))$. Coerciveness
      of the first term on $H^2((0,T))$ is obvious since it is the square of the norm on $H^2((0,T))$.

      Weak lower semicontinuity of the second term of \eqref{E:minproblem} follows since $H^2((0,T))$ is compactly embedded in $C^1([0,T])\subset C([0,T])$. Indeed,
      if $x_j\rightharpoonup x$ in $H^2([0,T])$, then $x_j$ is bounded in $H^2([0,T])$, and since the embedding of $H^2((0,T))$ into $C([0,T])$ is compact,
      $x_j$ has a subsequence $x_{j_l}$, which converges (to $x$ by uniqueness of a weak limit) in $C([0,T])$). Finally, note that for each $j=1,\dots,m$, 
      \begin{equation*}
         |x_i(t_j)-x(t_j)|\le \max_{t\in[0,T]} |x_i(t)-x(t)| \to 0
      \end{equation*}
      if $x_i\rightharpoonup x$ in $H^2((0,T))$, and so the sum in \eqref{E:minproblem} is weakly continuous.
     
      As $f$ is a sum of
      two weakly lower semicontinuous functions, it is clear that
      $f$ is weakly lower semicontinuous on $X$. 

      Next, we prove that $f$ is coercive on $X$. As $x\mapsto \int_0^T {\ddot x}(t)^2\, dt$ is coercive on $L^2([0,T])$, we see that
      \begin{equation*}
          f(x) \ge \frac{1}{2} \int_0^T \ddot{x}(t)^2\, dt \to \infty
      \end{equation*}
      as $\|x\|\to \infty$. 
      
      To show uniqueness, we will show that $f$ is strictly convex. For this, we will 
      use that $u\mapsto \int_0^T u(t)^2\, dt$ and $x\mapsto (x-\alpha_1)^2$ are strictly convex 
      on $L^2([0,T])$  and on $\mathbb R$, respectively. 
      Let 
      $f_1(x):=\int_0^T \ddot x(t)^2\, dt$, $f_2(x)=w_1(x(t_1)-\alpha_1)^2$,
      and  $f_3(x)=\sum_{j=2}^m w_j (x(t_j)-\alpha_j)^2$,
      so that $f=f_1+f_2+f_3$ on $X$. It is clear that $f_1$, $f_2$ and $f_3$ are convex
      (but not strictly convex) functions on $X$. 
      To show that $f$ is strictly convex, we need to prove that if
      \begin{equation}\label{E:convexeq}
         f(\lambda x_1 + (1-\lambda) x_2) = \lambda f(x_1) + (1-\lambda) f(x_2)
      \end{equation}
      for some $\lambda\in(0,1)$ and $x_1$, $x_2\in X$, then $x_1=x_2$. 
      
      To prove this, we assume that \eqref{E:convexeq} holds. Since $f_i$ are 
      convex for $i=1,\dots,3$ and $f=f_1+f_2+f_3$, equation \eqref{E:convexeq}
      holds also when $f$ is replaced by $f_i$, $i=1,\dots,3$. Since 
      $u\mapsto \int_0^T u(t)^2\, dt$ is strictly convex, the equality \eqref{E:convexeq}
      for $f_1$ implies that $\ddot x_1= \ddot x_2$. Then, by integration and using that
      $x_1(0)=x_2(0)=0$, it follows that there exists a real constant $A\in \mathbb R$ 
      such that $x_1(t)=x_2(t) +At$. 
      
      On the other hand, 
      since $x\mapsto (x-\alpha_1)^2$ is strictly convex and $w_1>0$,  
      equality \eqref{E:convexeq} for $f_2$ implies that $x_1(t_1)=x_2(t_1)$.
      Combining this with $x_1=x_2+At$, we obtain $A=0$ (since $t_1>0$ 
      and $x_1(0)=x_2(0)$). We have proved that $x_1(t)=x_2(t)$ for all $t\in [0,T]$,
      and hence $x_1=x_2$ in  $X$. This concludes the proof that $f$ is strictly convex
      on $X$, and from this it also follows that the minimizer is unique. 
     \end{proof}
\end{section}

\begin{section}{The Karush--Kuhn--Tucker conditions}
The constrained optimization problem will be solved with a vector space version of the KKT method, cf.
Theorem 1 p. 249 of \cite{dL69}.

Let $X:=\{x\in H^2((0,T));\; x(0)=0\}$, and let $Z:= C([0,T])\times \mathbb R$, with a norm defined by
\begin{equation*}
   \|(u,v_0)\|_Z = \left(\|u\|_{C([0,T])}^2 + |v_0|^2\right)^{1/2}.
\end{equation*}
We
define $G:X\to Z$ by
\begin{equation*}
   G(x) := \left(-\dot{x}, x(T)-x_{\max}\right).
\end{equation*}
We note that since $x\in H^2((0,T))$, it follows that $\dot x\in H^1((0,T))\subset C([0,T])$, and
so it is clear that $G:X\to Z$. 

It is straight-forward to check that $G$ is Fr\'echet differentiable, and its derivative is
\begin{equation*}
   G'(x)(h)  = \left(-\dot{h}, h(T)\right).
\end{equation*}

By the Riesz representation theorem (see e.g. pp. 113--115 of  \cite{dL69}, and pp. 146--150 of \cite{TL80}),
the dual space of $C([0,T])$ is identified with the normalized space of functions of bounded variation on $[0,T]$, denoted by $NBV([0,T])$,
consisting of functions $\nu$ of bounded variation on $[0,T]$ such that $\nu$ is right-continuous and $\nu(0)=0$,
such that the functionals $\phi$ on $C([0,T])$
can be expressed as the Riemann--Stieltjes integral
\begin{equation*}
   \phi(u) = \int_0^T u(t) d\nu(t),
\end{equation*}
and the norm of $\phi$ is the total variation of $\nu$ on $[0,T]$, denoted by $\|\nu\|_{NBV([0,T])}$.

We denote the dual space of $Z$ by $Z^*$. By the above result, $Z^*$ is identified with $NBV([0,T])\times \mathbb R$, and the norm
of an element $(\nu,\mu)\in Z^*$ is given by
\begin{equation*}
   \|(\nu,\mu)\|_{Z^*} = \left( \|\nu\|_{NBV([0,T])}^2 + \mu^2\right)^{1/2}.
\end{equation*}

The positive cone in $Z$ is
\begin{equation*}
   P:=\{(w,\alpha)\in Z;\; w\ge 0 \text{ and }\alpha\ge 0\}.
\end{equation*}
It is clear that $P$ has a nonempty interior. The positive cone $P^*$ in $X^*$ is
\begin{equation*}
 P^*:= \left\{(\nu,\mu)\in NBV([0,T])\times \mathbb R;\; \nu \text{ is nondecreasing and }\mu\ge 0\right\}.
\end{equation*}

   We will derive the KKT conditions for the optimization problem of equation \eqref{E:minproblem}. In order to do this, we first show that
   all points satisfying the inequality $G(x)\le 0$ (i.e. $G(x)\in -P$) are regular points for this inequality
   (cf. \cite{dL69}, p. 248).
   \begin{Lem}
      Every $x\in X$ with $G(x)\le 0$ is a regular point of the inequality $G(x)\le 0$.
   \end{Lem}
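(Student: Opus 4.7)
The plan is to recall the definition of \emph{regular point} for an inequality constraint in the sense of Luenberger: $x\in X$ with $G(x)\le 0$ is a regular point of the inequality $G(x)\le 0$ provided there exists $h\in X$ such that $G(x)+G'(x)h$ lies in the interior of $-P$. Here $X=\{x\in H^2((0,T));\, x(0)=0\}$, and the formulas for $G$ and $G'(x)$ give
\begin{equation*}
   G(x)+G'(x)h = \bigl(-\dot x-\dot h,\; x(T)-x_{\max}+h(T)\bigr)\in C([0,T])\times\mathbb R.
\end{equation*}
Since the norm on $C([0,T])$ is the sup norm and $[0,T]$ is compact, $(w,\alpha)$ lies in the interior of $-P$ precisely when $w(t)<0$ for every $t\in[0,T]$ and $\alpha<0$. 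So the task reduces to exhibiting, for each admissible $x$, an $h\in H^2((0,T))$ with $h(0)=0$ satisfying
\begin{equation*}
   \dot x(t)+\dot h(t)>0 \text{ for all }t\in[0,T],\qquad h(T)<x_{\max}-x(T).
\end{equation*}

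The key idea is that these two strict inequalities can be decoupled by choosing $h$ so that $\dot h$ cancels $\dot x$ up to a small positive constant. Concretely, for $\epsilon>0$ to be chosen, I propose
\begin{equation*}
   h(t) := -x(t)+\epsilon t.
\end{equation*}
Then $h(0)=0$ and $h\in H^2((0,T))$ since $x\in H^2((0,T))$, so $h\in X$. Moreover $\dot h=-\dot x+\epsilon$, which gives $\dot x+\dot h=\epsilon>0$ uniformly on $[0,T]$, handling the first component, and $x(T)-x_{\max}+h(T)=\epsilon T-x_{\max}$, which is strictly negative provided $\epsilon<x_{\max}/T$. Since $x_{\max}>0$, such an $\epsilon$ always exists, so $(G(x)+G'(x)h)\in\mathrm{int}(-P)$.

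This choice works uniformly in $x$, so every $x\in X$ with $G(x)\le 0$ is regular. The only mildly subtle point (hence the use of a correction of the form $-x(t)+\epsilon t$ rather than a plain linear $\epsilon t$) is the borderline case where $x(T)=x_{\max}$ and simultaneously $\dot x$ vanishes somewhere; a constant perturbation of $\dot x$ by $\epsilon>0$ would then violate $h(T)<0$, but subtracting $x$ itself absorbs all of the growth of $x$, leaving the single free parameter $\epsilon$ to satisfy both strict inequalities at once. Beyond this, the argument is a direct verification.
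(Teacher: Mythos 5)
Your proof is correct and is essentially the same as the paper's: the paper takes $h(t)=\frac{x_{\max}}{2T}t-x(t)$, which is exactly your $h(t)=-x(t)+\epsilon t$ with the specific choice $\epsilon=x_{\max}/(2T)$. Both arguments verify the same two strict inequalities by the same cancellation of $\dot x$.
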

   \begin{proof}
      Let $x\in X$ be such that $G(x)\in -P$. We need to show that there exists an $h\in X$
      such that $G(x)+G'(x)h$ is an interior point of $-P$,
      i.e. that $h\in H^2((0,T))$ satisfies
      \begin{equation*}
         \begin{aligned}
            -\dot x -\dot h<0, \\
            x(T)+h(T)-x_{\max}<0.
         \end{aligned}
      \end{equation*}
      There are clearly many choices for $h$, for example
      \begin{equation*}
         h(t) = \frac{x_{\max}}{2T} t - x(t).
      \end{equation*}
      With this choice, we have $h\in X$ and
      \begin{equation*}
         \begin{aligned}
            -\dot x(t) - \dot h(t) &= -\frac{x_{\max}}{2T} < 0, \\ 
            x(T)+h(T)-x_{\max} &= - \frac{x_{\max}}{2} <0,
         \end{aligned}
      \end{equation*}
      i.e. $G(x)+G'(x)h$ is an interior point of $-P$. 
   \end{proof}

The functional
$f:X\to \mathbb R$ defined by \eqref{E:f-def}
is Fr\'echet differentiable, and its derivative is given by
\begin{equation*}
   f'(x)(h) = \int_0^T \ddot{x}(t) {\ddot h}(t)\, dt + \sum_{i=1}^m w_i (x(t_i)-\alpha_i) h(t_i).
\end{equation*}

Let $x_*\in X$ be the minimizer of $f$ subject to $G(x)\in -P$.
By the KKT Theorem (see \cite{dL69}, p. 249), there exists a $z^*\in Z^*$, $z_*\ge 0$ (i.e. $z_*\in P^*$) such that the
Lagrangian
\begin{equation*}
   f(x) + \langle G(x),z_*\rangle
\end{equation*}
is stationary at $x_*$, and that $\langle G(x_*),z_*\rangle =0$.

An explicit statement of the KKT conditions implies the following result, which will be used in the next section
for constructing a numerical algorithm for the
solution:
\begin{Lem}\label{L:piecewise}
   Let $x_*\in X$ be the minimizer of the minimization problem \eqref{E:minproblem} - \eqref{E:constraint1}, and let $u_*:=\ddot{x}_*$. 
   Then the following holds:
   \begin{enumerate}
    \item $u_*$ is affine on each subinterval of $[t_{i-1},t_i)$ where $\dot x_*>0$,
    \item $u_*(0)=u_*(T)=0$.
    \item If $\dot x_*=0$ on an interval, then $u_*=0$ on this interval (and hence it is an affine function also there).  
   \end{enumerate}
\end{Lem}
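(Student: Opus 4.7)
\noindent The plan is to convert the abstract KKT conditions into a concrete Euler--Lagrange identity and extract the structure of $u_*=\ddot x_*$ via test-function arguments. Writing out the Lagrangian stationarity at $x_*$ with multiplier $z_*=(\nu_*,\mu_*)\in P^*$, for every $h\in X$,
\begin{equation*}
\int_0^T \ddot x_*(t)\,\ddot h(t)\,dt + \sum_{i=1}^m w_i(x_*(t_i)-\alpha_i)h(t_i) - \int_0^T \dot h(t)\,d\nu_*(t) + \mu_* h(T) = 0,
\end{equation*}
and complementary slackness reads $-\int_0^T \dot x_*\,d\nu_* + \mu_*(x_*(T)-x_{\max})=0$. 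Because $\dot x_*\ge 0$, $d\nu_*\ge 0$, $\mu_*\ge 0$, and $x_*(T)-x_{\max}\le 0$, both terms of the latter identity are nonpositive and hence vanish separately. The vanishing of the integral forces the nonnegative measure $d\nu_*$ to be concentrated on the closed set $\{\dot x_*=0\}$; in particular $\nu_*$ is constant on every open subinterval where $\dot x_*>0$.

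For part (1), I restrict the stationarity to $h\in C_c^\infty(J)$ where $J$ is an open subinterval of $(t_{i-1},t_i)$ on which $\dot x_*>0$. Such an $h$ automatically lies in $X$ and kills the data sum, the boundary term $\mu_* h(T)$, and (because $d\nu_*\equiv 0$ on $J$) the Stieltjes term, so the identity collapses to $\int_J u_*\ddot h\,dt=0$. This is precisely $u_*''=0$ in $\mathcal{D}'(J)$, so $u_*$ is affine on $J$. Part (3) is immediate from the definition of the weak derivative: if $\dot x_*$ is constant on an interval, then $\ddot x_*=0$ almost everywhere on it.

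For the natural boundary conditions in part (2), suppose $\dot x_*(0)>0$ (otherwise $u_*\equiv 0$ near $0$ by (3)). By continuity of $\dot x_*$ there is $\delta>0$ with $\dot x_*>0$ on $(0,\delta)$, so $\nu_*$ is constant there; right-continuity of $\nu_*$ at $0$ combined with the normalization $\nu_*(0)=0$ pins that constant at $0$, so $\nu_*\equiv 0$ on $[0,\delta)$. Testing with $h\in H^2((0,T))$ supported in $[0,\delta/2]$ with $h(0)=0$ and $h(\delta/2)=\dot h(\delta/2)=0$ but $\dot h(0)$ free, every term in the stationarity identity except $\int_0^{\delta/2} u_*\ddot h\,dt$ vanishes. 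Since $u_*$ is affine on $(0,\delta)$ by part (1), two integrations by parts leave only the boundary term $-u_*(0^+)\dot h(0)$, and arbitrariness of $\dot h(0)$ forces $u_*(0^+)=0$. The argument at $T$ is analogous, with the additional observation that the atom of $\nu_*$ at $T$ already vanishes whenever $\dot x_*(T)>0$, since that atom contributes $\dot x_*(T)(\nu_*(T)-\nu_*(T^-))$ to $\int \dot x_*\,d\nu_*=0$; testing then with $h$ supported near $T$ having $h(T)=0$ but $\dot h(T)$ free kills the data term at $t_m=T$, the $\mu_*$ term and the Stieltjes term, and the same integration by parts yields $u_*(T^-)=0$.

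The chief obstacle is this endpoint analysis: one must keep careful track of the right-continuity of $\nu_*$, its possible atom at $T$, the data node at $t_m=T$ and the multiplier $\mu_*$, and construct test functions whose boundary values of $h$ and $\dot h$ can be prescribed independently. Everything else reduces to distributional arguments on open intervals, which are standard.
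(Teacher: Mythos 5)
Your overall strategy --- write out the KKT stationarity identity, use complementary slackness to localize the measure $d\nu_*$ on $\{\dot x_*=0\}$, and then extract information by testing with suitable $h$ --- is the same as the paper's, and your proofs of parts (1) and (3) are correct. The gap is in part (2), specifically in the case dichotomy at the endpoints. You argue: either $\dot x_*(0)>0$, in which case your test-function/integration-by-parts argument applies, or $\dot x_*(0)=0$, in which case you invoke part (3). But $\dot x_*(0)=0$ does \emph{not} imply that $\dot x_*$ vanishes on an interval near $0$; it is consistent with $\dot x_*>0$ on $(0,\delta)$, and, worse, with the pathological scenario the paper itself flags in the Remark following the lemma: a sequence $s_j\downarrow 0$ with $\dot x_*(s_j)=0$ and $\dot x_*>0$ in between. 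In that scenario $d\nu_*$ may have atoms accumulating at $0$, $u_*$ is only known to be affine on each $(s_{j+1},s_j)$, and your argument gives no well-defined value $u_*(0^+)$ at all, let alone that it is zero. The same incomplete dichotomy occurs at $t=T$. (A smaller omission: at $t=0$ you should also note that the support of your test function avoids the data node $t_1$, which is easy since $t_1>0$.)

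The paper avoids this entirely by working with the combination $u_*+\nu_*$ rather than $u_*$ alone: testing with $h\in C_0^\infty([t_{i-1},t_i])$ after integrating the Stieltjes term by parts shows that $u_*(t)+\nu_*(t)=\beta_i+\gamma_i t$ on the \emph{whole} data subinterval $[t_{i-1},t_i)$, with no hypothesis on the sign of $\dot x_*$; the measure is absorbed into the unknown affine function. The boundary values then drop out of the resulting linear system: matching the coefficients of $\dot h(0)$ and $\dot h(T)$ in the general stationarity identity gives $\beta_1=0$ and $\beta_m+\gamma_m T-\nu_*(T)=0$, whence $u_*(0)=\beta_1-\nu_*(0)=0$ and $u_*(T)=\beta_m+\gamma_m T-\nu_*(T)=0$, with a right-continuous BV representative of $u_*$ fixed along the way so that these pointwise values are meaningful. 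To repair your proof you would either need to adopt this device, or separately rule out the accumulation scenario at the endpoints before running your integration by parts --- which is essentially the content of the later Lemma \ref{L:optimalinterval} and is not available at this stage.
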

\begin{Rem} 
   We cannot conclude directly from Lemma \ref{L:piecewise}  that $u_*$ is piecewise linear, since we cannot yet rule out that there is an increasing sequence of points $s_j\to s_0$ 
   such that $\dot x_*(s_j)=0$ while $x_*(t)>0$ for $t\in (s_{j-1},s_j)$ (and $u_*$ is affine on each of the intervals $(s_{j-1},s_j)$). We will see 
   in Lemma \ref{L:optimalinterval},
   that this does not happen for the optimal curve, and $u_*$ is in fact piecewise linear.
\end{Rem}

\begin{proof}
   By the KKT conditions \cite{dL69}, p. 249, there exists a $\nu_*\in NBV([0,T])$ and a $\mu_*\in\mathbb R$ such that
   \begin{equation}\label{E:KKT}
      \begin{aligned}
         \int_0^T \ddot{x}_*(t) {\ddot h}(t)\, dt + \sum_{i=1}^m w_i (x_*(t_i)-\alpha_i) h(t_i) 
         -\int_0^T \dot{h}(t)\, d\nu_*(t) + \mu_* h(T) = 0 
      \end{aligned}
   \end{equation}
   for all $h\in X$.
   Furthermore,
   \begin{equation}\label{E:compslack}
      -\int_0^T \dot{x}_*(t)\, d\nu_*(t) + \mu_* \left(x_*(T)-x_{\max}\right) = 0
   \end{equation}
   where $\nu_*$ is nondecreasing and $\mu_*\ge 0$. Equation \eqref{E:compslack} is the complementary slackness condition, and together with
   the constraint $G(x_*)\le 0$, it implies that $\nu_*(t)$ is constant for $t$ such that $\dot{x}_*(t)> 0$, and that
   $\mu_*=0$ if $x_*(T)-x_{\max}< 0$ .

   The Riemann-Stieltjes integral in \eqref{E:KKT} may be integrated by parts, and doing so and noting that $d{\dot h}(t)=\ddot{h} dt$ (since $\dot h\in H^1((0,T))$ and hence absolutely continuous), 
   we obtain after collecting the two integral terms
   \begin{equation}\label{E:KKT2}
      \begin{aligned}
         \int_0^T (u_*(t)+\nu_*(t)) \ddot{h}(t)\, dt &+ \sum_{i=1}^m w_i\left(x_*(t_i) -
         \alpha_i\right) h(t_i)
         - \nu_*(T)\dot{h}(T)  + \mu_* h(T) = 0,
      \end{aligned}
   \end{equation}
   for all $h\in X$. Choosing $h(t)=0$ on all except one of the subintervals $(t_{i-1},t_i)$, $i=1,\dots,m$, we conclude that for each $i=1,\dots,m+1$, 
   \begin{equation*}
       \int_{t_{i-1}}^{t_i}(u_*(t)+\nu_*(t))\ddot {h}(t)\, dt= 0
   \end{equation*}
   for all $h\in C_0^\infty([t_{i-1},t_i])$. Hence there exist $\beta_i$, $\gamma_i$, $i=1,\dots,m$ such that 
   $u_*(t) + \nu_*(t) = \beta_i+\gamma_i t$ on $(t_{i-1},t_i)$. We may assume (by choosing a representative for the function
   $u_*\in L^2((0,T))$), that $u_*(t)+\nu_*(t)=\beta_i+\gamma_i t$ on the half open interval $[t_{i-1},t_i)$ for $i=1,\dots,m$.        
   Hence $u_*$ is a right continuous function of bounded variation. 
    
    Now with a general $h\in X$, the integral term of \eqref{E:KKT2} can be rewritten using integration by parts as
    \begin{equation*}
       \begin{aligned}
          \sum_{i=1}^{m} \int_{t_{i-1}}^{t_i}(\beta_i + \gamma_i t) \ddot h(t)\, dt &= \sum_{i=1}^{m} \left((\beta_i + \gamma_i t_i)\dot h(t_i) - (\beta_{i}+\gamma_i t_{i-1}) \dot h(t_{i-1}) - \gamma_i(h(t_i)-h(t_{i-1}))\right) \\
          &=  \sum_{i=1}^{m-1} \left((\beta_{i}-\beta_{i+1} + (\gamma_{i} - \gamma_{i+1}) t_{i})\dot h(t_{i})-(\gamma_{i}-\gamma_{i+1})h(t_{i})\right) \\
          &\qquad -\beta_1 \dot h(0)  + (\beta_{m}+\gamma_{m}T)\dot h(T)-\gamma_{m} h(T).
       \end{aligned}      
    \end{equation*}
    By choosing $h$ appropriately (i.e. exactly one of $h(t_i)$ and $\dot h(t_i)$ not equal to zero), we conclude that
    \begin{equation}\label{E:splineeq}
       \begin{aligned}
          \beta_{i}-\beta_{i+1}+(\gamma_{i}-\gamma_{i+1})t_{i} &= 0, \qquad\text{for }i=1,\dots,m-1, \\
          -(\gamma_{i}-\gamma_{i+1})+ w_{i}(x_*(t_{i})-\alpha_{i})&=0, \qquad\text{for }i=1,\dots,m-1, \\
          \beta_1&=0, \\
          \beta_{m} + \gamma_{m} T - \nu_*(T) &= 0, \\
          -\gamma_{m} + \mu_*+w_m(x_*(T)-\alpha_m)&=0,
       \end{aligned}
    \end{equation}
    where we have also used that $h(0)=0$. In particular, since $\nu_*(0)=0$ and $\beta_1=0$, it follows that $u_*(0)=0$. Hence $u_*\in NBV([0,T])$.
    Note that the first equation of \eqref{E:splineeq} implies that $u_*+\nu_*$ is continuous at the spline knots $t_i$, $i=1,\dots,m$, 
    and the second equation implies that the derivative of $u_*+\nu_*$ has jumps of size $-w_{i}(x_*(t_{i})-\alpha_{i})$ at $t_i$, $i=1,\dots,m-1$.

   Recall that $\nu_*$ is (locally) constant for $t$ such that $\dot x_*(t)>0$. Let us examine
   what happens for a point $s_0$ such that $\dot x_*(s_0)=0$.  If $s_0$ is an isolated zero                 
   of $\dot x_*$, then $\nu_*$ may have a jump discontinuity at $s_0$ (where it is right
   continuous). If $\dot x_*=0$ on an interval around $s_0$, then clearly also 
   $u_*=\ddot x_*=0$ on this interval. In particular $u_*$ is piecewise linear in this interval.
\end{proof}

\end{section}

\begin{section}{An algorithm for computing the optimal solution}\label{S:algorithm}
   \noindent Using Lemma \ref{L:piecewise}, we will now reformulate the optimization problem as a finite dimensional
   problem which we can solve numerically. 
   This is essentially the approach of Section 7.3 of \cite{EM10},
   and their method has been adapted to the extra constraint $x(T)\le x_{\max}$. 
   Instead of using dynamical programming as in \cite{EM10}, we give an outline of a branch and bound algorithm 
   for finding an optimal solution to the problem. The variables of this new problem are 
   $x_1,\dots,x_m,v_1,\dots,v_m$, which are the (unknown) values
   of the function $x(t)$ and its derivative $\dot x(t)$  at the spline knots.

   Assuming initially that the values $x_1,\dots,x_n$ and $v_1,\dots,v_m$ are known, we will use the method of \cite{EM10} to determine a
   curve with minimal cost under the constraint that it passes through the points $(t_1,x_1),\dots,(t_m,x_m)$ with derivatives at these points
   equal to $v_1,\dots,v_m$, respectively.
   This will give us a new cost function depending on the variables $x_1,\dots,x_m$ and $v_1,\dots,v_m$. Minimizing this function
   is equivalent to the original infinite dimensional problem.

   Now we focus our attention on one interval $[t_i,t_{i+1}]$, and rename it $[t_0,t_F]$. Without loss of generality,
   we assume that $t_0=0$. The corresponding values of $x$ at the endpoints $0$ and $t_T$ are  denoted by $x_0$ and $x_F$, respectively.
   We assume without loss of generality that $x_0=0$. The values of the derivatives at the endpoints are denoted by
   $\dot x_0$ and $\dot x_F$. The following lemma is essentially given in \cite{EM10}, but here we provide the full proof 
   with more details, taking care of excluding the pathological case in the remark after Lemma \ref{L:piecewise}. 
   Note also that a typo in formula (7.27) of \cite{EM10}  has been corrected
   ($\dot x_i^2$ instead of $\dot x_i$). 
   \begin{Lem}{\cite{EM10}}\label{L:optimalinterval}
      Suppose that $\dot x_0, \dot x_F\ge 0$, $x_F\ge 0$ and $t_F>0$. Then the optimal control
      $u$ which minimizes $\int_0^{t_F} u^2\, dt$ subject to the constraints $\ddot x(t)=u(t)$ for $t\in (0,t_F)$, $x(0)=0$, $x(t_F)=x_F$, $\dot x(0)=\dot x_0$,
      $\dot x(t_F)=\dot x_F$ and $\dot x(t)\ge 0$ for $t\in(0,t_F)$ is given by
      \begin{equation*}
            u(t)= \left(\frac{6(\dot x_0+\dot x_F)}{t_F^2}-12\frac{x_F}{t_F^3}\right)t +
            \frac{6 x_F}{t_F^2}-\frac{4\dot x_0}{t_F}-\frac{2\dot x_F}{t_F}
      \end{equation*}
      if $x_F\ge \frac{t_F}{3}\left(\dot x_0 + \dot x_F - \sqrt{\dot x_0 \dot x_F}\right)$, and
      \begin{equation*}
         u(t) =
         \begin{cases}
            \frac{2\left(\dot x_0^{3/2}+ \dot x_F^{3/2}\right)^2}{9 x_F^2}\left(t - \frac{3 x_F \dot x_0^{1/2}}{\dot x_0^{3/2} + \dot x_F^{3/2}}\right) &\text{if }0\le t<\frac{3x_F \dot x_0^{1/2}}{\dot x_0^{3/2}+\dot x_F^{3/2}}, \\
            0 &\text{if }\frac{3x_F \dot x_0^{1/2}}{\dot x_0^{3/2}+\dot x_F^{3/2}}\le t\le t_F-\frac{3 x_F\dot x_F^{1/2}}{\dot x_0^{3/2} + \dot x_F^{3/2}}, \\
            \frac{2\left(\dot x_0^{3/2}+ \dot x_F^{3/2}\right)^2}{9 x_F^2}\left(t-t_F + 
            \frac{3 x_F \dot x_F^{1/2}}{\dot x_0^{3/2} + \dot x_F^{3/2}}\right) &\text{if }
            t_F-\frac{3 x_F\dot x_F^{1/2}}{\dot x_0^{3/2} + \dot x_F^{3/2}}< t\le t_F
         \end{cases}
      \end{equation*}
      if $x_F < \frac{t_F}{3}\left(\dot x_0 + \dot x_F - \sqrt{\dot x_0 \dot x_F}\right)$.
      The contribution to the cost in the two cases is
      \begin{equation*}
      \int_0^{t_F} u(t)^2\, dt =
      \begin{cases}
         4 \frac{(\dot x_0^2 + \dot x_F^2) t_F^2-3 x_F (\dot x_0 + \dot x_F)t_F + 3 x_F^2 + \dot x_0 \dot x_F t_F^2}{t_F^3} &
         \text{if }x_F\ge \frac{t_F}{3}\left(\dot x_0 + \dot x_F - \sqrt{\dot x_0 \dot x_F}\right), \\
         \frac{2}{9 x_F}\left(\dot x_0^{3/2}+\dot x_F^{3/2}\right)^2 &\text{otherwise.}
      \end{cases}
      \end{equation*}
      The corresponding spline function $x$ is given by
      \begin{equation*}
         x(t) = \dot x_0 t + \left(\frac{3 x_F}{t_F^2} - \frac{2\dot x_0 + \dot x_F}{t_F}\right)t^2 + \left(\frac{\dot x_0+\dot x_F}{t_F^2} 
         - \frac{2 x_F}{t_F^3}\right) t^3
      \end{equation*}
      if $x_F\ge \frac{t_F}{3}\left(\dot x_0 + \dot x_F - \sqrt{\dot x_0 \dot x_F}\right)$, and
      \begin{equation}\label{E:piecewise}
         x(t) = 
         \begin{cases}
             \frac{x_F \dot x_0^{3/2}}{\dot x_0^{3/2} + \dot x_F^{3/2}} + \frac{(\dot x_0^{3/2} + \dot x_F^{3/2})^{2}}{27 x_F^2}\left(t - \frac{3 x_F \dot x_0^{1/2}}{\dot x_0^{3/2} + \dot x_F^{3/2}}\right)^3 & 
             \text{ if }0\le t < \frac{3 x_F \dot x_0^{1/2}}{\dot x_0^{3/2} + \dot x_F^{3/2}} \\
             \frac{x_F \dot x_0^{3/2}}{\dot x_0^{3/2} + \dot x_F^{3/2}} & \text{if }\frac{3 x_F \dot x_0^{1/2}}{\dot x_0^{3/2} + \dot x_F^{3/2}}\le t\le t_F-\frac{3 x_F\dot x_F^{1/2}}{\dot x_0^{3/2} + \dot x_F^{3/2}}, \\
             \frac{x_F \dot x_0^{3/2}}{\dot x_0^{3/2} + \dot x_F^{3/2}} + \frac{\left(\dot x_0^{3/2}+ \dot x_F^{3/2}\right)^2}{27 x_F^2}\left(t-t_F + 
            \frac{3 x_F \dot x_F^{1/2}}{\dot x_0^{3/2} + \dot x_F^{3/2}}\right)^3 &\text{if }
            t_F-\frac{3 x_F\dot x_F^{1/2}}{\dot x_0^{3/2} + \dot x_F^{3/2}}< t\le t_F
         \end{cases}
      \end{equation}
      if $x_F < \frac{t_F}{3}\left(\dot x_0 + \dot x_F - \sqrt{\dot x_0 \dot x_F}\right)$.
   \end{Lem}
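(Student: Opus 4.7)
The plan is to treat the two cases separately, exploiting the structure forced by Lemma \ref{L:piecewise}. Throughout, the Euler--Lagrange equation for an unconstrained minimization of $\int_0^{t_F}\ddot x^2\,dt$ with prescribed $x(0),\dot x(0),x(t_F),\dot x(t_F)$ is $x^{(4)}=0$, so that $x$ is a cubic polynomial whose four coefficients are uniquely determined by these four data.

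First I would handle Case 1, the \emph{unconstrained} case $x_F\ge \tfrac{t_F}{3}(\dot x_0+\dot x_F-\sqrt{\dot x_0\dot x_F})$. Starting from the ansatz $x(t)=A+Bt+Ct^2+Dt^3$ and imposing $x(0)=0$, $\dot x(0)=\dot x_0$, $x(t_F)=x_F$, $\dot x(t_F)=\dot x_F$, I would solve the resulting $4\times 4$ linear system; this yields the cubic formula for $x$, and differentiating twice gives the claimed affine $u$. The cost is then a direct integration of $u^2$ over $[0,t_F]$. It remains to identify the range of validity: the quadratic $\dot x(t)$ already satisfies $\dot x(0),\dot x(t_F)\ge 0$, and I would show it is nonnegative on the whole interval exactly when $x_F\ge \tfrac{t_F}{3}(\dot x_0+\dot x_F-\sqrt{\dot x_0\dot x_F})$. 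The boundary case corresponds to $\dot x$ touching zero tangentially at some interior $\tau$, which produces a system $\dot x(\tau)=\ddot x(\tau)=0$ whose solvability pins down precisely that threshold.

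Next, Case 2: when $x_F$ is below the threshold, the Case 1 cubic violates $\dot x\ge 0$. By Lemma \ref{L:piecewise} the optimal $u_*$ is piecewise affine with $u_*=0$ on every subinterval where $\dot x_*=0$. I would posit the structure that the formulas suggest, namely a single closed interval $[a,b]\subset(0,t_F)$ on which $\dot x_*\equiv 0$ and $x_*\equiv c$ for some $c\in(0,x_F)$, flanked by two boundary intervals $[0,a]$ and $[b,t_F]$ on which the Case 1 analysis applies separately with data $(0,\dot x_0;c,0)$ and $(c,0;x_F,\dot x_F)$ respectively. Each sub-problem has a vanishing endpoint derivative, so it sits precisely on its own Case 1/Case 2 threshold, and its Case 1 cost formula simplifies. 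The three parameters $a,b,c$ are then chosen to minimize the sum of the two sub-costs; setting the partials with respect to $a$, $b$, $c$ to zero and solving the resulting algebraic system I expect to extract
\begin{equation*}
a=\frac{3x_F\dot x_0^{1/2}}{\dot x_0^{3/2}+\dot x_F^{3/2}},\qquad b=t_F-\frac{3x_F\dot x_F^{1/2}}{\dot x_0^{3/2}+\dot x_F^{3/2}},\qquad c=\frac{x_F\dot x_0^{3/2}}{\dot x_0^{3/2}+\dot x_F^{3/2}},
\end{equation*}
from which $u$, $x$ and the cost $\tfrac{2}{9x_F}(\dot x_0^{3/2}+\dot x_F^{3/2})^2$ can be read off directly.

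The main obstacle will be Case 2. Two delicate points need care: excluding more exotic geometries (multiple disjoint flat intervals, as flagged in the remark after Lemma \ref{L:piecewise}), and ensuring $0\le a\le b\le t_F$ and $0\le c\le x_F$ precisely in the claimed regime. For the first point I would appeal to the strict convexity of $x\mapsto\int\ddot x^2\,dt$ and the uniqueness of the minimizer proved in Section 3: having constructed an admissible candidate that satisfies all KKT conditions with an explicit multiplier $\nu_*$ (taken piecewise constant, with its only jump arising across $[a,b]$ to absorb the active constraint), uniqueness forces this candidate to be the minimizer. For the second point I would check by direct substitution that $a\le b$ is equivalent to $x_F\le \tfrac{t_F}{3}(\dot x_0+\dot x_F-\sqrt{\dot x_0\dot x_F})$, so that the Case 1/Case 2 dichotomy is sharp and the two cost formulas agree at the threshold by continuity.
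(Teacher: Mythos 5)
Your Case 1 argument is exactly the paper's: solve the $4\times4$ system for the cubic, integrate $u^2$, and characterize the threshold by analyzing when the quadratic $\dot x$ stays nonnegative (the paper does this via a discriminant/vertex analysis, you via the tangency system $\dot x(\tau)=\ddot x(\tau)=0$; these are equivalent). The real difference is Case 2: the paper's written proof actually stops after establishing the monotonicity threshold for the unconstrained cubic and never derives the piecewise formulas, the flat level $c=\frac{x_F\dot x_0^{3/2}}{\dot x_0^{3/2}+\dot x_F^{3/2}}$, or the Case 2 cost — these are taken from \cite{EM10}. Your plan fills that gap, and your numbers check out: optimizing the two sub-costs over the junction points does push each boundary piece onto its own threshold (note this is a \emph{conclusion} of the optimization — the first sub-cost is $4(\dot x_0 a-3c)^2/a^4$-decreasing in $a$ up to $a=3c/\dot x_0$ — not, as you phrase it, an automatic consequence of the vanishing endpoint derivative), and $a\le b$ is indeed equivalent to the Case 2 inequality via the factorization $\dot x_0^{3/2}+\dot x_F^{3/2}=(\dot x_0^{1/2}+\dot x_F^{1/2})(\dot x_0+\dot x_F-\sqrt{\dot x_0\dot x_F})$. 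Two repairs are needed. First, the uniqueness you invoke from Section 3 is for the full data-fitting problem on $[0,T]$; here you need uniqueness for the single-interval interpolation problem, which follows separately (but easily) since $\int\ddot x^2$ together with the fixed values of $x(0),\dot x(0)$ determines $x$, and the constraint set is convex, so any admissible candidate satisfying the KKT conditions of \emph{this} subproblem is its unique minimizer; this is also the cleanest way to rule out multiple flat intervals. Second, had you carried out the integration you propose to "read off", you would find the Case 2 cost is $\frac{4}{9x_F}\left(\dot x_0^{3/2}+\dot x_F^{3/2}\right)^2$ — consistent with your sub-cost sum $\frac{4}{9}\left(\frac{\dot x_0^3}{c}+\frac{\dot x_F^3}{x_F-c}\right)$ at the optimal $c$ and with the function $V$ defined after the lemma — so the coefficient $\frac{2}{9x_F}$ in the lemma's statement is a typo that your derivation would have exposed rather than reproduced.
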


   \begin{proof}
      The optimal control $u$ which minimizes $\int_0^{t_F} u^2\, dt$ subject to the constraints $\ddot x=u$, $x(0)=0$, $x(t_F)=x_F$, $\dot x(0)=\dot x_0$, $\dot x(t_F)=\dot x_F$ 
      (that is all the constraints except the monotonicity constraint $\dot x\ge 0$), is an affine function $u(t)=Ct+D$ where $C$ and $D$ are chosen so that the 
      constraints are satisfied. This gives the expression
      \begin{equation*}
          u(t) = \left(\frac{6(\dot x_0+\dot x_F)}{t_F^2}-12\frac{x_F}{t_F^3}\right)t +
            \frac{6 x_F}{t_F^2}-\frac{4\dot x_0}{t_F}-\frac{2\dot x_F}{t_F}
      \end{equation*}
      for all choices of $x_F$, $\dot x_0$, $\dot x_F$ and $t_F$. By integration and using the remaining constraints, the corresponding curve $x(t)$ on the interval $(0,t_F)$ is given by
      \begin{equation*}
         x(t) = \left(\frac{\dot x_0+\dot x_F}{t_F^2} 
         - \frac{2 x_F}{t_F^3}\right) t^3  + \left(\frac{3 x_F}{t_F^2} - \frac{2\dot x_0 + \dot x_F}{t_F}\right)t^2 + \dot x_0 t.
      \end{equation*}
      Clearly, in the cases when the monotonicity constraint $\dot x\ge 0$ is satisfied, this curve is optimal also for the original problem. 
      We claim that the $\dot x\ge 0$ on $(0,t_F)$ if and only if $\frac{x_F}{t_F}\ge \frac{1}{3}\left(\dot x_0 +\dot x_0-\sqrt{\dot x_0 \dot x_F}\right)$. 
      To prove this, note that the quadratic function $\dot x$ is given by
      \begin{equation*}
         \begin{aligned}
         \dot x(t) &= 3 \left(\frac{\dot x_0+\dot x_F}{t_F^2} - \frac{2 x_F}{t_F^3}\right)t^2 +2\left(\frac{3 x_F}{t_F^2}-\frac{2 \dot x_0+\dot x_F}{t_F}\right) t + \dot x_0  \\
                      &=\frac{1}{2} C t^2 + Dt + \dot x_0.
         \end{aligned}
      \end{equation*}
      We note that $\dot x(0)=\dot x_0\ge 0$ and $\dot x(t_F)= \dot x_F\ge 0$ by the assumptions, and so $\dot x(t)\ge 0$ for all $t\in(0,t_F)$ if and only if 
      the value of $\dot x$ at an interior minimizer is nonnegative. 
      
      We study the cases $C\ge 0$ and $C<0$ separately. If $C<0$, then $\dot x$ doesn't have a minimizer, and so the monotonicity condition will always 
      be satisfied. We note that in this case, 
      \begin{equation*}
          \frac{x_F}{t_F}>\frac{1}{2}\left(\dot x_0+\dot x_F\right) \ge \frac{1}{3}\left(\dot x_0 + \dot x_F - \sqrt{\dot x_0 \dot x_F}\right),
      \end{equation*}
      is always satisfied. 
     
      If $C\ge 0$, i e if  $\frac{x_F}{t_F}\le \frac{1}{2}\left(\dot x_0 + \dot x_F\right)$, then we need a necessary and sufficient condition for when the minimum value of $\dot x$ is 
      nonnegative and the minimizer belongs to the 
      interval $(0,t_F)$. The minimizer belongs to the interval if and only if $-\frac{D}{2}\in [0, \frac{t_F C}{2}]$, i e if
      \begin{equation*}
         0\le \frac{2 \dot x_0 + \dot x_F}{t_F} - \frac{3 x_F}{t_F^2} \le 3 t_F\left(\frac{\dot x_0 + \dot x_F}{t_F^2} - \frac{2 x_F}{t_F^3}\right),
      \end{equation*}
      which holds if and only if
      \begin{equation}\label{E:mininint}
         \frac{x_F}{t_F}\le \frac{1}{3}\left(\dot x_0 + \dot x_F + \min(\dot x_0,\dot x_F)\right).
      \end{equation}
      The minimum value $B-\frac{D^2}{2C}$ is nonnegative if and only if $\frac{BC}{2}-\left(\frac {D^2}{2}\right)^2\ge 0$ i e
      \begin{equation*}
         \begin{aligned}
         0&\le 3\dot x_0 \left(\frac{\dot x_0 + \dot x_F}{t_F^2 - 2 \frac{x_F}{t_F^3}}\right) - \left(\frac{3 x_F}{t_F^2} - \frac{2 \dot x_0 + \dot x_F}{t_F}\right)^2 \\
         &= -\left(\frac{3 x_F}{t_F^2}-\frac{\dot x_0 + \dot x_F}{t_F}\right)^2 - \frac{\dot x_0 \dot x_F}{t_F^2},
         \end{aligned}
      \end{equation*}
      and this inequality holds if and only if
      \begin{equation*}
         \frac{1}{3}\left(\dot x_0 + \dot x_F - \sqrt{\dot x_0 \dot x_F}\right) \le \frac{x_F}{t_F}\le \frac{1}{3}\left(\dot x_0 + \dot x_F + \sqrt{\dot x_0 \dot x_F}\right).
      \end{equation*}
      We note that the right inequality is always satisfied if the minimizer belongs to the interval $(0,t_F)$, by \eqref{E:mininint} and since 
      $\min(\dot x_0,\dot x_F)\le\sqrt{\dot x_0 \dot x_F}$.
      To summarize, we see that irrespective of the sign of $C$, a necessary and sufficient condition for the monotonicity of $x$ on $(0,t_F)$ is 
      that 
      \begin{equation*}
         \frac{x_F}{t_F}\ge \frac{1}{3}\left(\dot x_0 +\dot x_F - \sqrt{\dot x_0 \dot x_F}\right),
      \end{equation*}
      as required.
   \end{proof}

   Let
   \begin{equation*}
      V(\Delta x,v_l,v_r,\Delta t) :=
      \begin{cases}
          4\frac{(v_l^2+v_r^2) (\Delta t)^2-3 \Delta x (v_l+v_r) \Delta t+3 (\Delta x)^2+v_l v_r (\Delta t)^2}{(\Delta t)^3} &\text{if }
          \Delta x \ge \frac{\Delta t}{3}(v_l+v_r-\sqrt{v_l v_r}), \\
          \frac{4}{9 \Delta x}\left(v_l^{3/2}+ v_r^{3/2}\right)^2 &\text{otherwise.}
      \end{cases}
   \end{equation*}
   In view of Lemma \ref{L:piecewise} and by considering functions which on
   each subinterval is of the form of Lemma \ref{L:optimalinterval}, we have proved the following:
   \begin{Thm}\label{T:finitedim}
       The infinite dimensional optimization problem of Section \ref{S:problem} is equivalent to the finite
       dimensional problem
       \begin{equation}\label{E:optfun}
          \min \left(\frac{1}{2} \sum_{i=1}^{m-1} V(x_{i+1}-x_i,\dot x_i,\dot x_{i+1},t_{i+1}-t_i) + \frac{1}{2} \sum_{i=1}^m w_i(x_i-\alpha_i)^2\right)
       \end{equation}
       subject to the constraints
       \begin{equation*}
           \begin{aligned}
              -\dot x_i&\le 0\text{ for }i=1,\dots,m, \\
              x_{i}-x_{i+1}&\le 0\text{ for }i=1,\dots,m-1, \\
              x_m&\le x_{\max}, \\
              x_1&=0,
           \end{aligned}
       \end{equation*}
   \end{Thm}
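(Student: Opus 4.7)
The plan is a localization argument. Fix any $x\in X$ and write $x_i:=x(t_i)$, $\dot x_i:=\dot x(t_i)$. Then
\begin{equation*}
   f(x)=\frac{1}{2}\sum_{i}\int_{t_i}^{t_{i+1}}\ddot x(t)^2\,dt+\frac{1}{2}\sum_{i=1}^m w_i(x_i-\alpha_i)^2,
\end{equation*}
and the data term depends on $x$ only through the knot vector. Hence I would minimize $f$ in two stages: first, with $(x_i,\dot x_i)$ fixed, minimize each integral $\int_{t_i}^{t_{i+1}}\ddot x^2\,dt$ independently subject to the prescribed boundary data at $t_i,t_{i+1}$ and the local monotonicity constraint $\dot x\ge 0$; second, optimize the resulting objective over all feasible knot vectors.

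The inner problem is precisely the one solved by Lemma \ref{L:optimalinterval} (after an affine reparametrization placing the left endpoint at $0$). Its hypotheses $\dot x_i,\dot x_{i+1}\ge 0$ and $x_{i+1}-x_i\ge 0$ are enforced by the inequality constraints $-\dot x_j\le 0$ and $x_j-x_{j+1}\le 0$ of the finite dimensional problem, and the minimal contribution is exactly $V(x_{i+1}-x_i,\dot x_i,\dot x_{i+1},t_{i+1}-t_i)$. Summing over the subintervals and adding the data penalty reproduces the finite dimensional objective \eqref{E:optfun}; the global boundary conditions $x(0)=0$ and $x(T)\le x_{\max}$ descend to the constraints on the first and last knot values.

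For the converse direction, given any feasible tuple $(x_i,\dot x_i)$ I would paste together the optimal subinterval curves produced by Lemma \ref{L:optimalinterval} to build a candidate $x$. Matching the prescribed values of $x$ and $\dot x$ at each knot makes the concatenation lie in $C^1([0,T])$, and the closed form of the pieces makes $\ddot x\in L^2((0,T))$, so $x\in H^2((0,T))$; moreover monotonicity on each piece is built into Lemma \ref{L:optimalinterval}, giving $x\in X$. The value $f(x)$ then equals the finite dimensional objective evaluated at $(x_i,\dot x_i)$, and combining the two directions with the existence and uniqueness result from Section 3 yields the equivalence.

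The main obstacle is the pathology flagged in the remark following Lemma \ref{L:piecewise}: a priori, the set $\{\dot x_*=0\}$ inside some $(t_i,t_{i+1})$ could consist of infinitely many disjoint intervals accumulating at an interior point, in which case $u_*$ would fail to be piecewise linear in the literal sense and the localization step could not be closed with finitely many variables. What resolves this is the structure of Lemma \ref{L:optimalinterval} itself: on any subinterval, the optimal profile has $\dot x$ either strictly positive in the interior or vanishing on exactly one connected middle segment. Since the global minimizer is locally optimal on each piece by the localization above, it must have this structure, and $u_*$ is genuinely piecewise linear on $[0,T]$; the equivalence of the two problems follows.
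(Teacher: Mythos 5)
Your proof is correct and follows essentially the same route as the paper: the paper's entire argument for Theorem \ref{T:finitedim} is the single sentence that it follows from Lemma \ref{L:piecewise} together with Lemma \ref{L:optimalinterval} applied on each subinterval, and your two-stage localization (inner minimization on each knot interval via Lemma \ref{L:optimalinterval}, outer minimization over feasible knot vectors, plus the pasting construction for the converse) is exactly the argument being gestured at. If anything you supply more detail than the paper does, in particular the observation that local optimality of the global minimizer on each subinterval forces the explicit structure of Lemma \ref{L:optimalinterval} and thereby dispatches the accumulation pathology flagged in the remark after Lemma \ref{L:piecewise}.
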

   The nonlinear optimization problem of Theorem \ref{T:finitedim} can be solved numerically, for example with {\tt fmincon} in {\tt matlab}.   
   Unfortunately, this method does not seem to give stable results when there are more than 10 subintervals, and it is hard to analyse due to the piecewise defined 
   objective function. 
   
   To come around this problem, we suggest  using a branch-and-bound approach
   which is outlined below. We emphasise that the algorithm is guaranteed to terminate, since there are finitely many (at most $2^{m+1}$, but probably much less in practice)
   subproblems to solve, each of 
   which are convex and can be solved within a fixed time, e.g. with Newton's method. We suggest that a breadth first search is used when going through the branches of the tree, and it is likely that for most problems it will not be needed to search through so many levels of the tree. Further investigations will be needed to find out how efficient the 
   algorithm is and the limit of the size of the problem that can be solved in practice. These questions will be addressed in a future project. In the current paper (Sections \ref{S:CaMKII} and \ref{S:cumulative}) we give some examples where the method has been implemented for up to 30 data points with good results.
      
    \begin{enumerate}
    \item[1.] Start by fitting an ordinary cubic smoothing spline using the data points and with the constraints that $x(0)=0$ and $x(T)\le x_{\max}$. 
    If this curve satisfies $\dot x(t)\ge 0$ for every $t\in (0,T)$, or, equivalently $x(t_{i+1})-x(t_i)\ge \frac{t_{i+1}-t_i}{3}\left(v_i + v_{i+1} -\sqrt{v_i v_{i+1}}\right)$
    for every $i=0,\dots,m-1$, then this must be the optimal curve, and we can stop. Otherwise, the value of the optimal function for this step gives a lower bound for the
    optimal solution.
    \item[2.] If the curve in step 1 was not optimal, we branch the problem into $m-1$ subproblems, where each subproblem corresponds to an interval for which
    the spline is given by a piecewise defined curve as in \eqref{E:piecewise}, whereas the spline should be an ordinary cubic spline on the other subintervals.
    A minimization problem is solved using \eqref{E:optfun}, except that the first line in the definition is taken for the subintervals where the curve should 
    be an ordinary spline, and the second line for the subinterval where the spline should be piecewise defined. 
    The optimal value for each of these subproblems give lower bounds for the optimum of that branch. If the curve is monotone, then we have an optimum for the current
    branch and don't need to branch any further. Otherwise, that node have to be branched into further subproblems, each with one more subinterval where we use a piecewise
    defined spline curve.
    \item[3.] We continue branching and bounding.
    Branches whose lower bound is smaller than an optimum in another side branch can be cut off, and don't need to be examined further.
    In the end, we compare the branch with the smallest optimum, which will give the minimum of the full problem.
   \end{enumerate}

   In Sections \ref{S:CaMKII} and \ref{S:cumulative}, we show how this method can be  used to construct graphs of increasing curves relevant in applications from computational biology and for finding cumulative distribution functions.

   \end{section}

   \begin{section}{Applications to an intracellular signaling model}\label{S:CaMKII}
   \begin{figure}[ht]
      \centering
      \begin{subfigure}[b]{0.33\textwidth}
         \includegraphics[width=\textwidth]{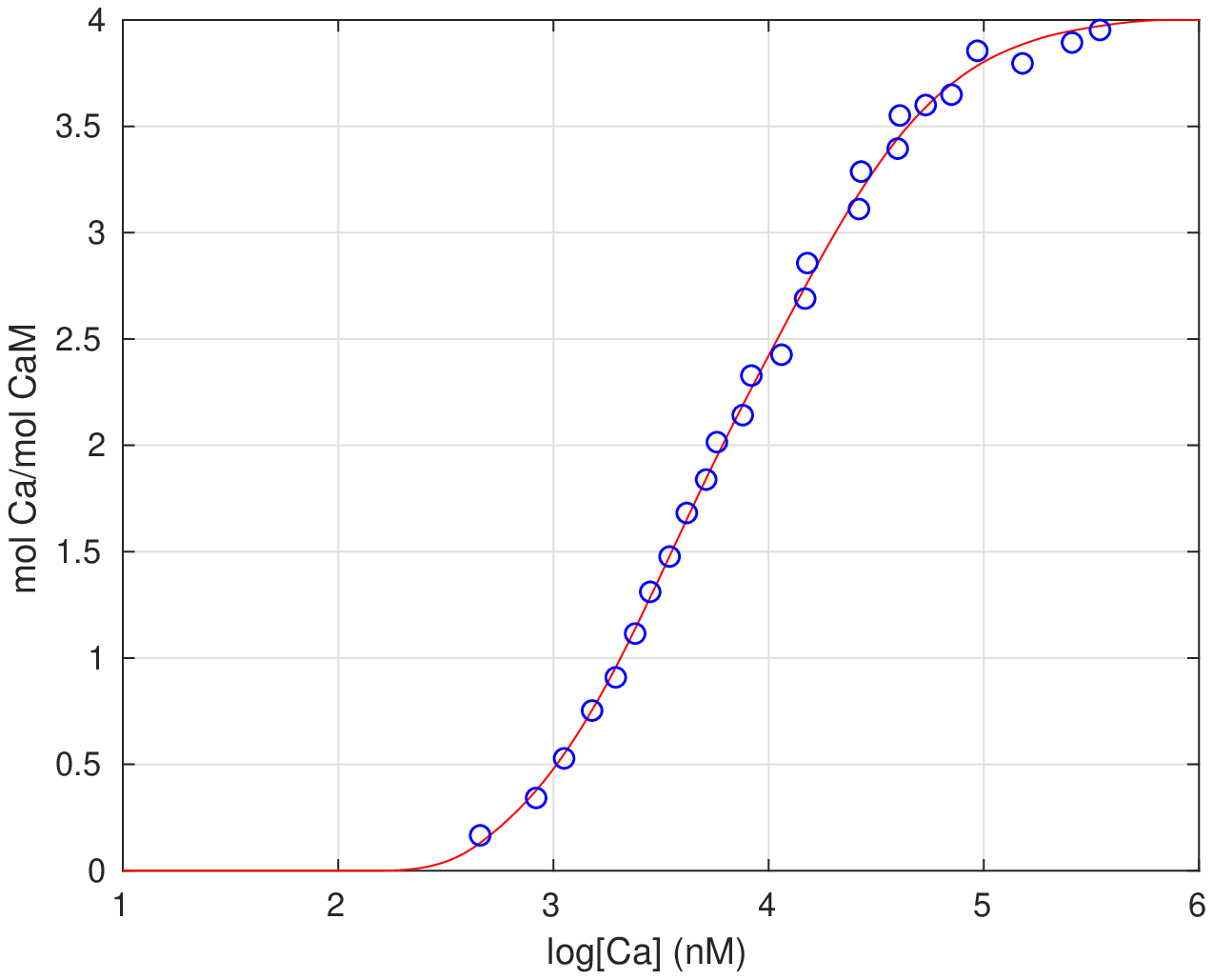}
      \end{subfigure}%
      \begin{subfigure}[b]{0.33\textwidth}
         \includegraphics[width=\textwidth]{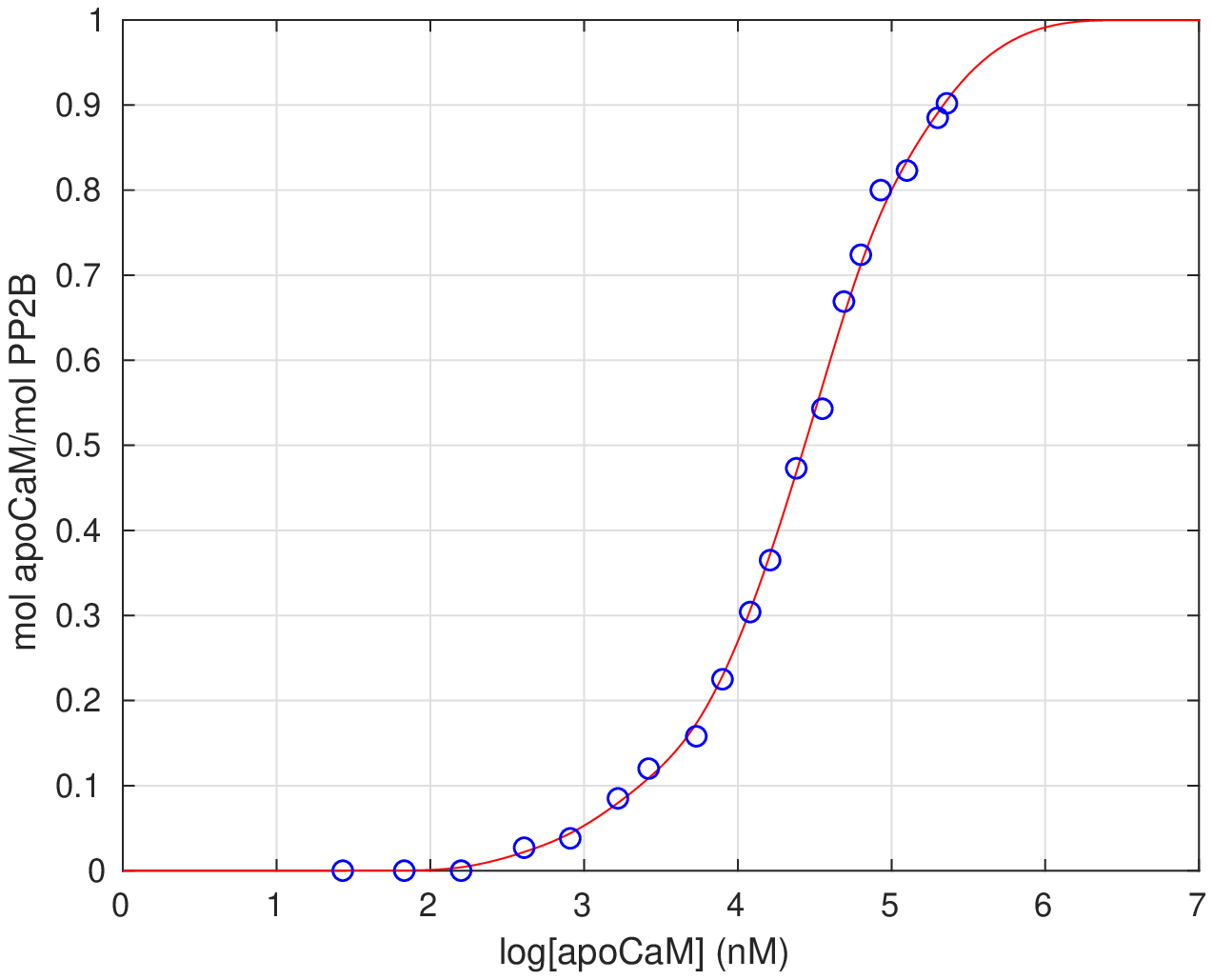}
      \end{subfigure}%
      \begin{subfigure}[b]{0.33\textwidth}
         \includegraphics[width=\textwidth]{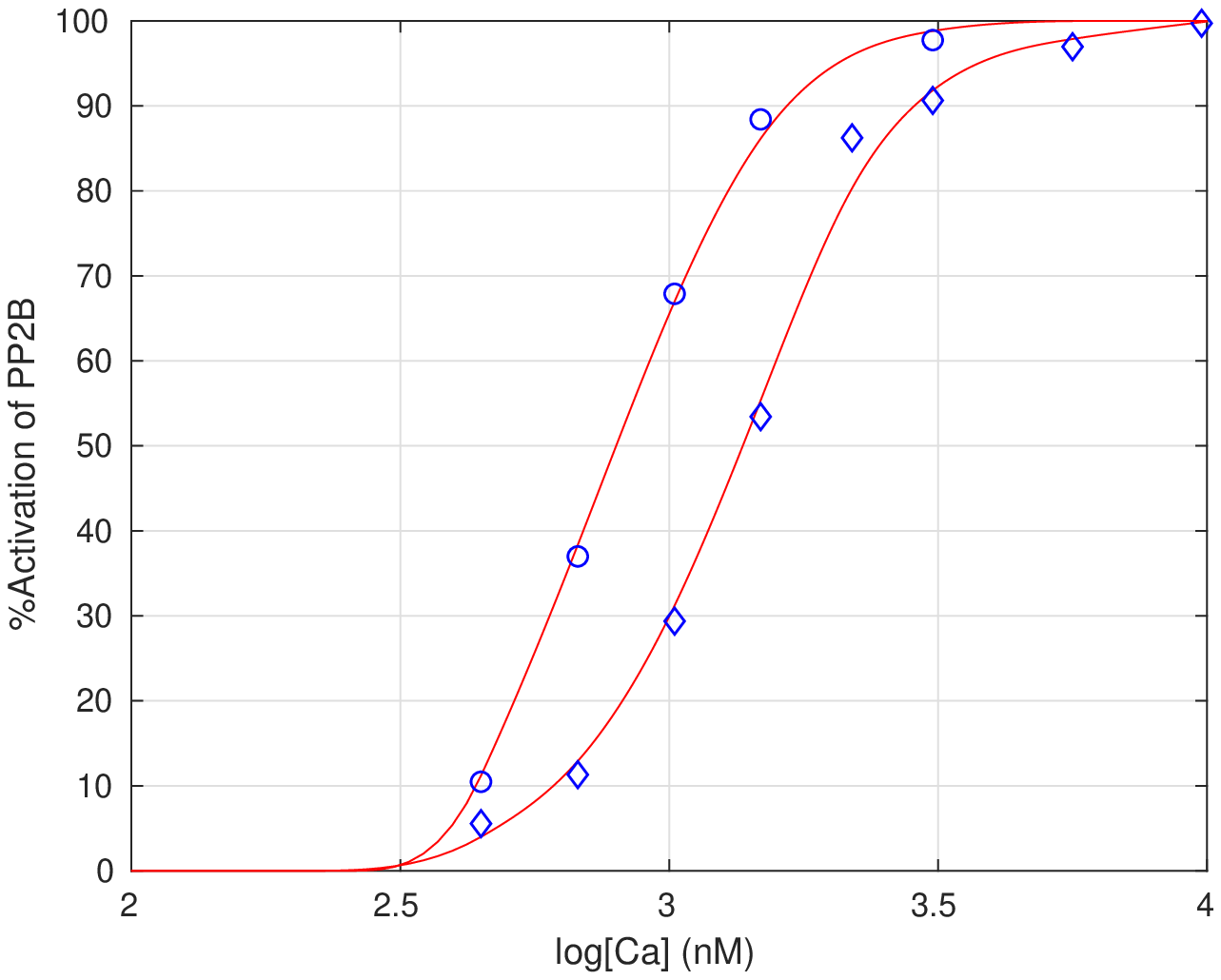}
      \end{subfigure}
      \begin{subfigure}[b]{0.33\textwidth}
          \includegraphics[width=\textwidth]{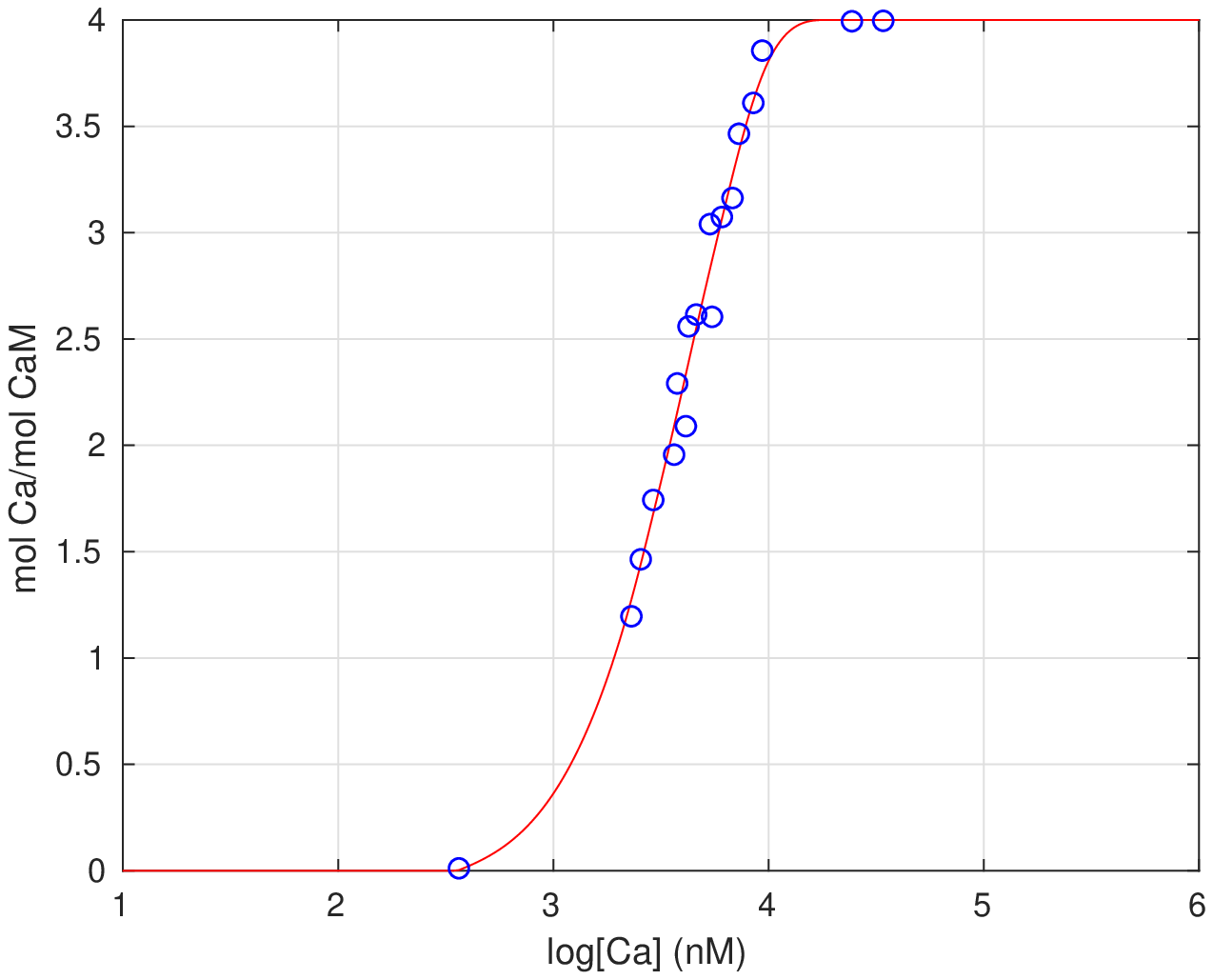}
      \end{subfigure}%
      \begin{subfigure}[b]{0.33\textwidth}
         \includegraphics[width=\textwidth]{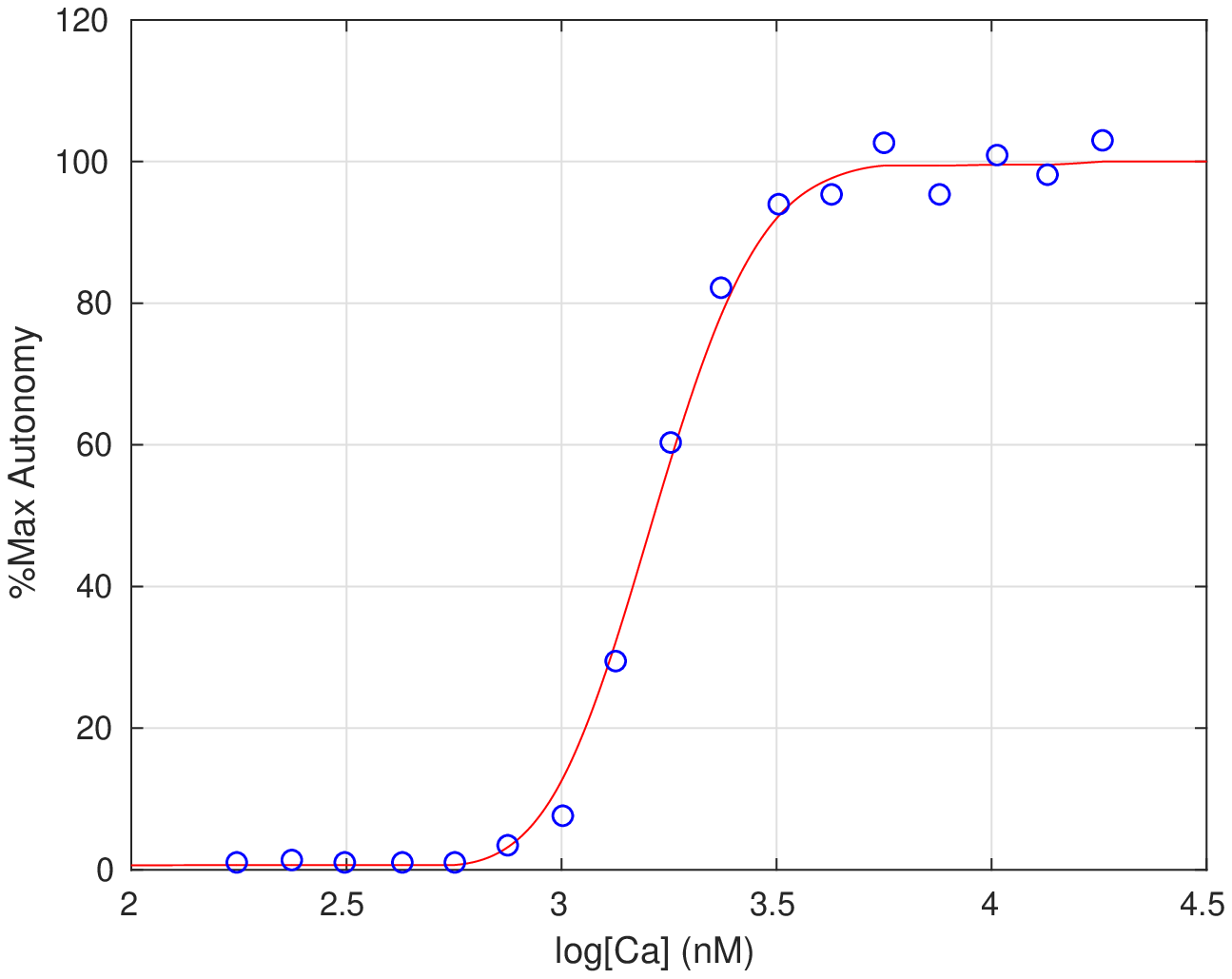}
      \end{subfigure}
      \caption{Reconstruction of the first five plots of Figure 12.3 of \cite{NGEJBK14} with monotone smoothing splines, using the branch and bound algorithm
      outlined in Section \ref{S:algorithm}. The data for the respective subfigures are taken (in order) from \cite{SK94, OYFS11, SK94, Shifman13968, Bradshaw10512}.}
      \label{F:CaMKII}
   \end{figure}
   \noindent 
   As a first application of the algorithm developed in Section 
   \ref{S:algorithm}, we suggest curve fitting using monotone smoothing splines
   as an alternative to the parametric models that are commonly used in modelling
   of pathways of the cell. This is expected to be particularly useful in cases where the underlying
   chemistry is not completely understood, but when certain monotonicity trends in the
   data can be observed.
   The ODE models or stochastic models that are commonly used can be very complicated, see e.g. \cite{NGEJBK14}
   where the modelling process of this type of models is described. In Section 3 of this reference, a relatively simple modelling example of this type, occuring in    neuroscience is given, which we describe briefly here, to give the reader context. 
  
   Calmodulin is an abbreviation for calcium-modulated protein, which is an intermediate calcium-binding 
   messenger protein present in all eukaryotic cells.  Once bound to a calcium ion, calmodulin acts as part of a calcium signal pathway by modifying its 
   interactions with various target proteins such as kinases or phosphatases. Its importance in neuroscience stems from its crucial involvement in
   synaptic plasticity. 
      
   We consider five data sets with experimental data taken from \cite{ Bradshaw10512, OYFS11, Shifman13968, SK94}, 
   corresponding to models describing this particular pathway. See \cite{NGEJBK14, EJMKNSH19} for a model using a system
   of ordinary differential equations stemming from steady state equations for these reactions, and where the same data sets are used. 
   This particular model consists of the elementary species calcium (Ca), calmodulin (CaM), protein phosphatase 2B (PP2B), and Ca/CaM-dependent protein kinase II (CaMKII) and protein phosphatase 1 (PP1).
   
   Up to four Ca ions are bound by calmodulin,
   and the first data set that we consider describes how many (moles of) ions of calcium are bound to 
   CaM per (mole of) CaM, and this quantity is plotted versus the 
   Ca concentration. When the Ca concentration
   increases, more Calcium ions are bound to the proteins, and it is therefore 
   natural to assume that the curve corresponds to the graph of an increasing function. 
   Each calmodulin molecule can bind at most four Ca ions, and hence the range of the function is 
   naturally included in $[0,4]$. When there is no Ca present in the system, there cannot be any
   bounds, and so we require that the curve passes through the origin. The data set is taken 
   from \cite{SK94}, and the data together with the fitted monotone spline is shown in the first 
   subfigure of Figure \ref{F:CaMKII}. 
   
   The binding of Ca ions by Calmodulin is a cooperative process. Ca-bound CaM activates PP2B, another protein implicated in molecular processes related to learning which also plays a role in striatal signaling. 
   Dataset 2, which is taken from \cite{OYFS11}, describes the number of
   moles of apo calmodulin (apoCaM, i e calmodulin without calcium bound to it) bound to each mole PP2B versus the concentration of apoCaM.
   Since one PP2B molecule can bind at most one calmodulin molecule, the 
   range of the function is naturally between $0$ and $1$. As there has to be apoCaM in the system for this type of binding to occur, we require that the $(0,0)$ is on the curve. The more apoCaM there is in the system, the more likely it is for such a binding to occur, and it is hence natural to assume that the fitted function is increasing. 
   
   In the third subfigure, percentage activation of PP2B is plotted versus Ca
   concentration at two different concentrations of CaM (30 nM for the left curve and 300 nM for the right curve). The data for this subfigure is taken from \cite{SK94}. Naturally, the range of the function is contained in $[0,100]$, and the data suggests that the binding is more likely to occur for higher concentrations of Ca, and hence it is natural to fit the data with a curve
   which is the graph of an increasing function. Again, Ca is needed for the activation to occur, and for this reason we require that the curve
   starts at the origin.
   
   The third protein CaMKII, is a kinase, which is activated by the binding of Ca--CaM. In the fourth subfigure, we consider data from \cite{Shifman13968}, representing the number of moles of Ca that is bound to CaM per mole of CaM in the presence of the enzyme CaMKII. As in subfigure 1, the curve is expected to be the graph of a function which is increasing, whose range is contained in $[0,4]$, and which is originating from the origin. 
   
   CaMKII molecules exist as dodecamers, consisting of two hexamer rings. 
   A CaMKII unit that has bound CaM can autophosphorylate when sitting beside an active neighboring unit in the same hexamer ring. The phosphorylated unit can remain active even in the absence of Ca-CaM. 
   In subfigure 5, the data comes from \cite{Bradshaw10512}, and it describes
   the percentage phosphorylated CaMKII (autonomy in CaMKII activity) versus calcium concentration.
   
   The method of the current paper is applied to  data sets in order to fit curves which
   come close to the data points. The weights $w_i$ were set to $1$
   for all examples and the parameter $\lambda$ was chosen to be $100$ 
   for the first two datasets and $1000$ for the three last.
   
   The same range was used for the variables as in Figure 12.3 of 
   \cite{NGEJBK14}. Instead of imposing a new type of condition corresponding
   to the limit as the independent variable tends to infinity, an additional data point was introduced,
   which forces the curve to come close to the maximum
   bound at the right endpoint of the interval. For example, for dataset 1,
   we demand that the curve comes close to the point $(6,4)$. 
   After doing this, the method of Section \ref{S:algorithm} could be used directly. The 
   plots of Figure \ref{F:CaMKII} were obtained, and these can be compared to the first five plots in 
   Figure 12.3 of \cite{NGEJBK14}. 
   
      \end{section}
   \begin{section}{Applications for cumulative distribution functions and an example from cell cycle models}\label{S:cumulative}
      A second application to the technique of this paper, is for reconstructing an unknown distribution function given
      some data points.
      
      Suppose that it is known that a sample comes from a distribution with an absolutely continuous distribution function,
      but that the exact form of the distribution is unknown.
      Then we could reconstruct the cumulative distribution function by using monotone splines. 
      We first test the method from a sample of data points coming from a normal distribution.
      
      Using $x_{max}=1$, $1000$ random points were generated following the normal distribution with expectation value $0$ and
      standard deviation $1$. Then a histogram with $20$ bins was created using Matlab's function {\tt histcounts}. The vector
      $\alpha$ with data values was created by using Matlab's function {\tt cumsum}. With $\lambda=50$, the method 
      of this paper was used with the minor modification that $u(0)=0$ is replaced by $u(t_0)\ge 0$ to create an approximation of the 
      cumulative distribution function. By differentiating the obtained spline function
       an estimate for the density function could also be obtained.
      The results can be seen in Figure \ref{F:normal}.
      \begin{figure}[ht]
          \centering
      \begin{subfigure}[b]{0.44\textwidth}
         \includegraphics[width=\textwidth]{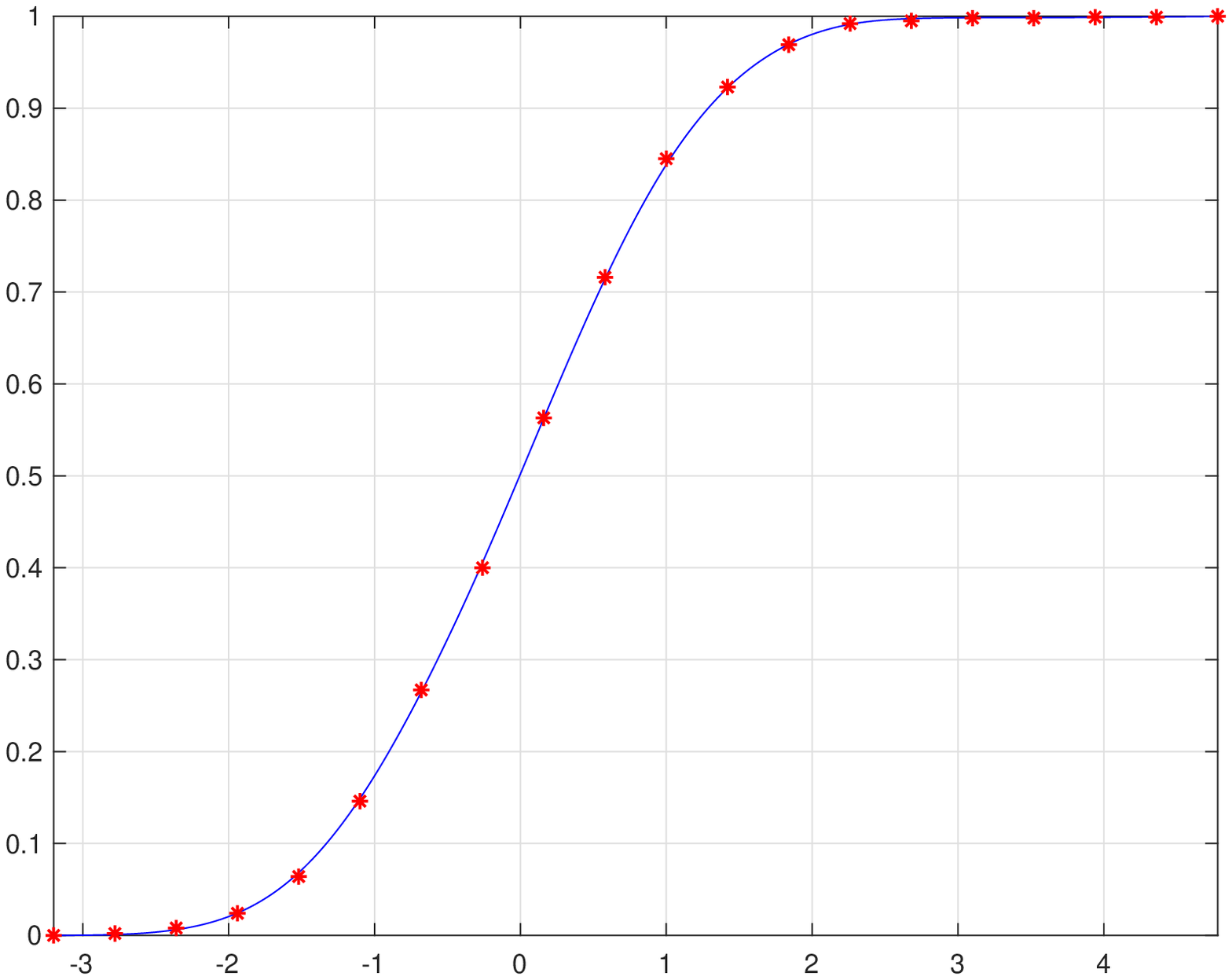}
      \end{subfigure}%
      \begin{subfigure}[b]{0.56\textwidth}
         \includegraphics[width=\textwidth]{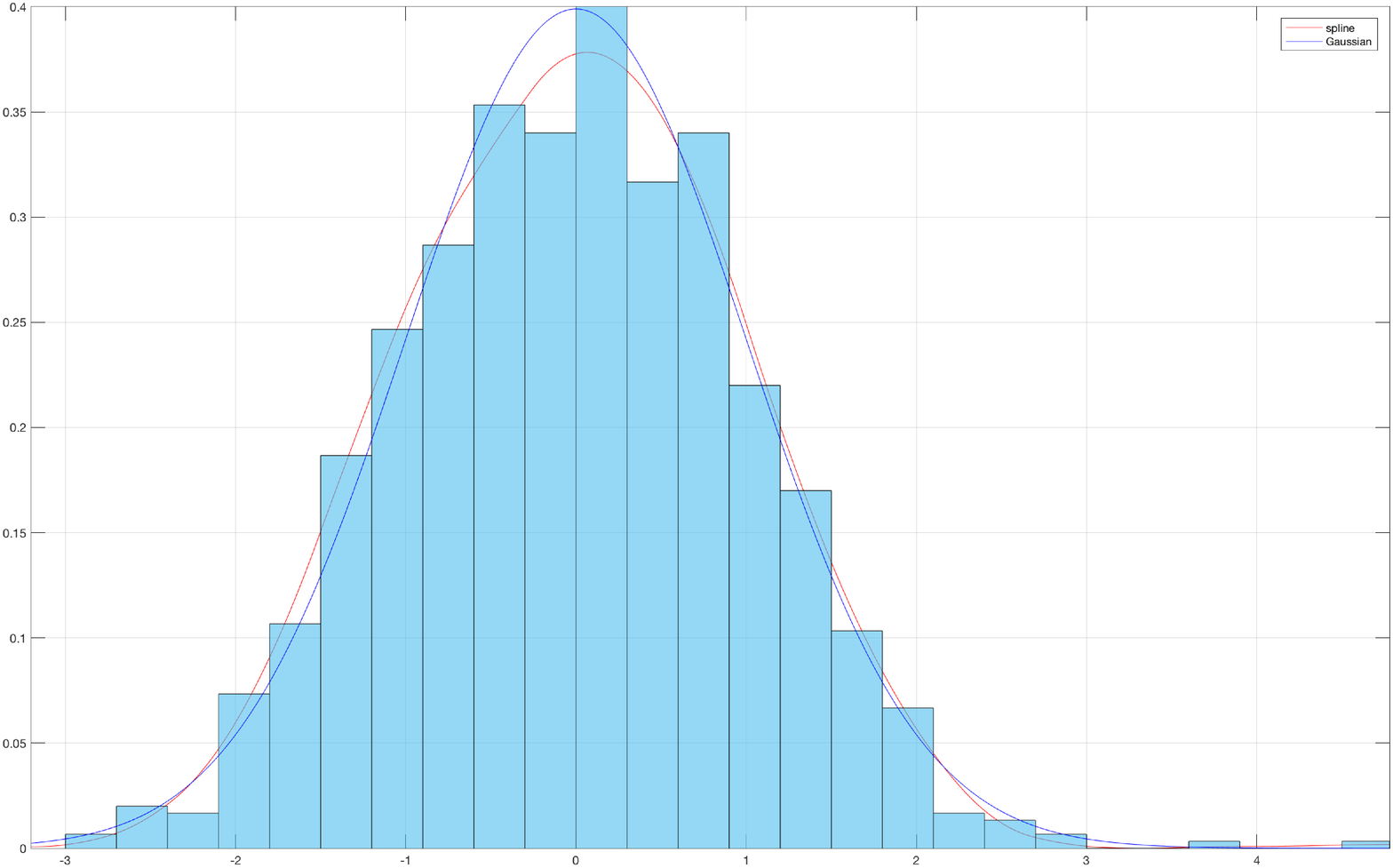}
      \end{subfigure}%      
      \caption{Estimate of the cumulative distribution function using monotone splines (left), and comparison of the derived density function with the exact density function of the normal distribution and the histogram (right).
\label{F:normal}}
      \end{figure}
     
      The method is more useful when the data does not come from a standard distribution, and the following is an example of such a situation arising in cell biology. 
      The cell cycle consists of four distinct phases: $G1$, $S$, $G2$ and $M$. For many types of cells, the time a cell spends in the $G1$ phase is highly variable, and it 
      is of interest to find the distribution for the time a cell spends in the $G1$ phase. See for example \cite{sMS16}, where such a distribution is used in an age structured
      cell cycle model. 
      FUCCI is a fluorescence technology that can be used for tracking the time an individual 
      cell spends in the $G1$ phase \cite{SKMHHOKFMMIOMM08, SOHKOM08}. Using the movie S1 of the supplementary material of \cite{SKMHHOKFMMIOMM08}, the histogram data
      for the time that each cell in that movie stays in the $G1$ phase was obtained. Using $\lambda=3$,  the cumulative distribution function could be estimated and is shown in Figure \ref{F:FUCCI}
       \begin{figure}[h]
          \centering
      \begin{subfigure}[b]{0.5\textwidth}
         \includegraphics[width=\textwidth]{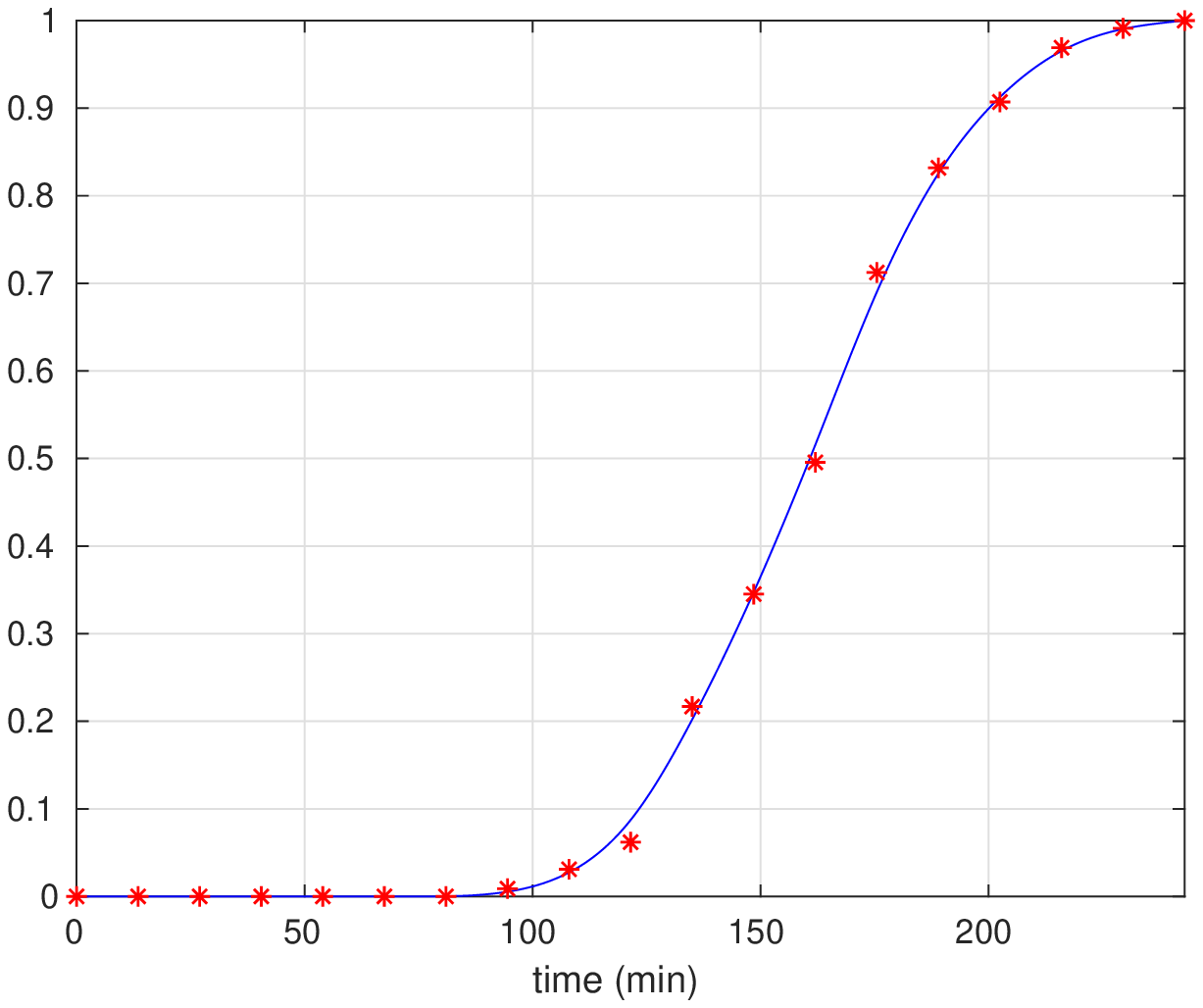}
      \end{subfigure}%
      \begin{subfigure}[b]{0.5\textwidth}
         \includegraphics[width=\textwidth]{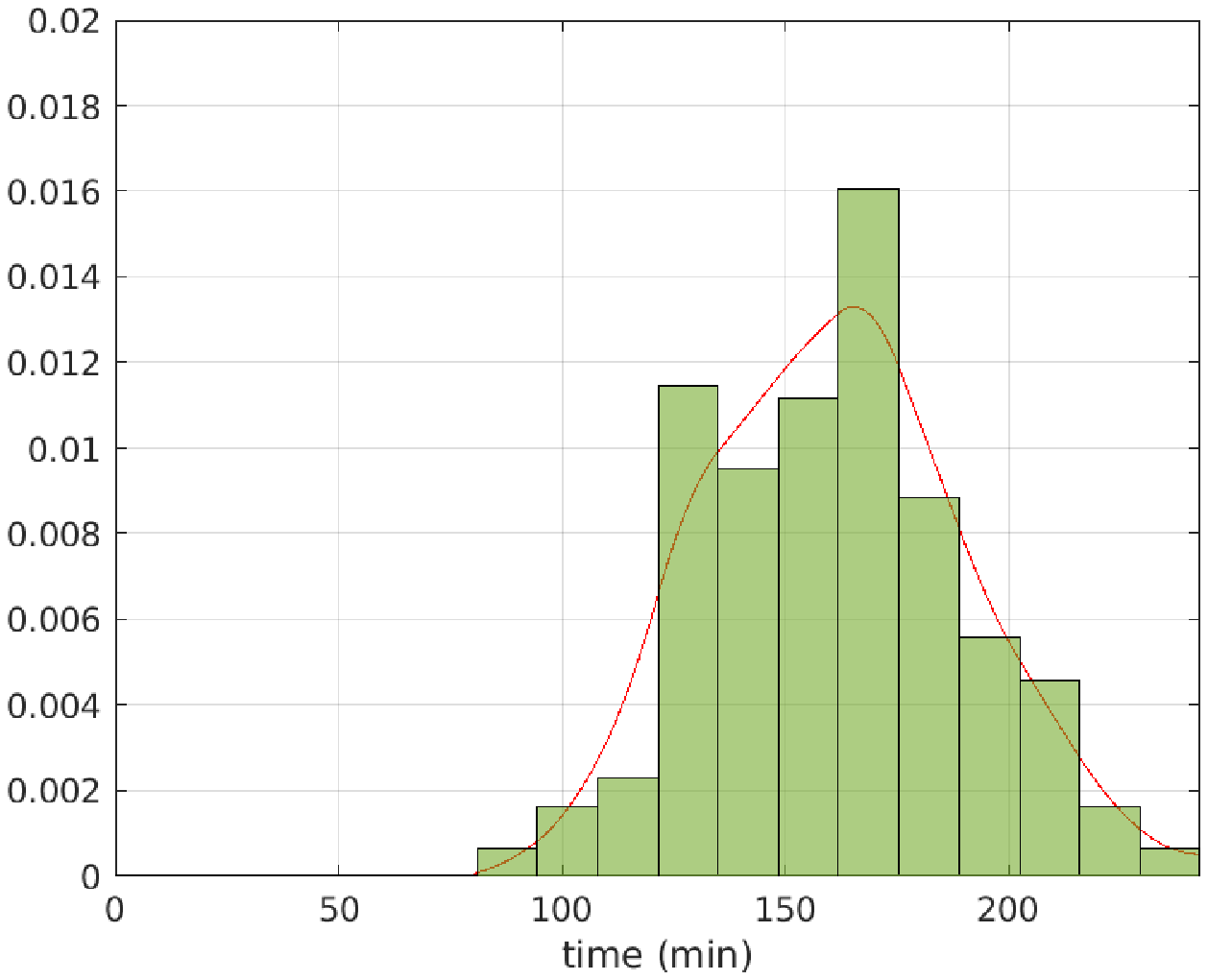}
      \end{subfigure}%      
      \caption{Estimate of the cumulative distribution function using monotone splines (left), and comparison of the derived density function with the histogram (right).
\label{F:FUCCI}}
      \end{figure}
      
   \end{section}
\begin{section}{Acknowledgements}
The author would like to thank Clyde Martin for reading and commenting on an earlier version of the manuscript and
 Olivia Eriksson for a useful discussion about the data sets occurring in Figure \ref{F:CaMKII}. She is also thankful to the reviewers
 for useful comments which led to an improvement of the paper.
\end{section}

\bibliography{referens}
\bibliographystyle{acm}
\end{document}